\title[might never be free]{Deeply concatenable subgroups might never be free}
\theoremstyle{definition}\newtheorem{theorem}{Theorem}
\theoremstyle{definition}
\theoremstyle{definition}\newtheorem*{B}{Theorem \ref{lengthfunctiontheorem}}
\theoremstyle{definition}\newtheorem{bigtheorem}{Theorem}
\numberwithin{theorem}{section}
\theoremstyle{definition}\newtheorem{corollary}[theorem]{Corollary}
\theoremstyle{definition}\newtheorem{proposition}[theorem]{Proposition}
\theoremstyle{definition}\newtheorem{definition}[theorem]{Definition}
\theoremstyle{definition}
\theoremstyle{definition}\newtheorem{example}[theorem]{Example}
\theoremstyle{definition}
\theoremstyle{definition}
\theoremstyle{definition}\newtheorem{lemma}[theorem]{Lemma}
\theoremstyle{definition}
\theoremstyle{definition}
\theoremstyle{definition}
\theoremstyle{definition}
\newcommand{\W}{\mathcal{W}}
\newcommand{\HEG}{\operatorname{HEG}}
\newcommand{\Bd}{\mathcal{BD}}
\newcommand{\Scat}{\mathcal{SCAT}}
\newcommand{\proj}{\operatorname{proj}}
\newcommand{\Red}{\operatorname{Red}}
\begin{document}

\author[Samuel M. Corson]{Samuel M. Corson}
\address{Ikerbasque- Basque Foundation for Science and Matematika Saila, UPV/EHU, Sarriena S/N, 48940, Leioa - Bizkaia, Spain}
\email{sammyc973@gmail.com}

\author[Saharon Shelah]{Saharon Shelah}
\address{Einstein Institute of Mathematics, The Hebrew University of Jerusalem, Jerusalem 91904 Israel}
\address{Department of Mathematics, Rutgers University, Piscataway, NJ 08854 USA}
\email{shelah@math.huji.ac.il}

\keywords{free group, Baire property, fundamental group, axiom of choice, strongly bounded group, length function}
\subjclass[2010]{Primary 20K20; Secondary 03E25, 03E35, 03E75}
\thanks{The research of the first author was
supported by European Research Council grant PCG-336983.  The research of the second author was partially supported by European Research Council grant 338821.  Paper 1142 on Shelah's list.}

\begin{abstract}  We show that certain algebraic structures lack freeness in the absence of the axiom of choice.  These include some subgroups of the Baer-Specker group $\mathbb{Z}^{\omega}$ and the Hawaiian earring group.  Applications to slenderness, completely metrizable topological groups, length functions and strongly bounded groups are also presented.
\end{abstract}

\maketitle

\begin{section}{Introduction}

The axiom of choice has been an essential tool in understanding uncountable groups.  It is used in producing nonconstructive homomorphisms, selecting Hamel bases, and determining the freeness of certain groups.  We explore some algebraic consequences that  hold under an alternative to this axiom.

Let ZF denote the Zermelo-Fraenkel axioms of set theory and ZFC denote ZF plus the axiom of choice.  For sets $X$ and $Y$ we write $|X| \leq |Y|$ if there is an injection from $X$ to $Y$.  The axiom of choice is equivalent to the assertion that for any sets $X$ and $Y$ at least one of $|X| \leq |Y|$ or $|Y|\leq |X|$ holds.  Recall that the \emph{axiom of dependent choice}, denoted DC, is the statement that if $R$ is a binary relation on a set $X \neq \emptyset$ such that $(\forall x)(\exists y)(xRy)$ there exists a sequence $\{z_n\}_{n\in\omega}$ such that $z_n R z_{n+1}$ for all $n$.  Consequences of DC include the axiom of countable choice as well as the assertion that for any set $X$ at least one of $|\omega|\leq |X|$ or $|X| \leq |\omega|$ holds.  Clearly DC is a consequence of the axiom of choice.

We will utilize DC in conjunction with an assertion that the abstract set theory of the Cantor set $2^{\omega}$ is inextricably tied to topology.  The precise formulation of this assertion is that all subsets of $2^{\omega}$ have the \emph{Baire property} (see Section \ref{TheoremA}).  Let $(*)$ denote the theory ZF + DC + ``Every subset of $2^{\omega}$ has the Baire property."  In his celebrated work \cite{So} Solovay showed that $(*)$ is consistent provided one assumes the existence of an inaccessible cardinal.  The second author showed in \cite{Sh1} that ZF is equiconsistent with $(*)$.  Almost all consequences of $(*)$ in this note are known to be false under ZFC and so all such consequences are independent of ZF + DC.

In Section \ref{TheoremA} we establish the first of the main results.  The notions $g_n\searrow 1_G$ and  \emph{deeply concatenable} are defined in Section \ref{TheoremA} immediately before the proof of Theorem \ref{thebigone}.  They essentially state that any subsequence may be concatenated in such a way that the product is a well defined limit in the group (Definitions \ref{Hausdorffarrow} and \ref{Looparrow}).  

\begin{bigtheorem}\label{thebigone}  $(*)$ The following hold:

\begin{enumerate}  
\item  Suppose $G$ is a Hausdorff topological group, $g_n\searrow 1_G$ and $\phi: G \rightarrow H$ is a homomorphism.  If the $g_n$ are not eventually in $\ker(\phi)$ then $|2^{\omega}|\leq|H|$.

 \item  Suppose $(l_n)_{n\in \omega}$ is deeply concatenable for a subgroup $K\leq \pi_1(X, x)$ and $\phi: K \rightarrow H$ is a homomorphism.  If the $[l_n]$ are not eventually in $\ker(\phi)$ then $|2^{\omega}|\leq|H|$.
\end{enumerate}
\end{bigtheorem}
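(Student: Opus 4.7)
The plan is to convert $(g_n)$ (resp.\ $(l_n)$) into a map $\Psi:2^{\omega}\to G$ (resp.\ $\Psi:2^{\omega}\to K$) and then study $f:=\phi\circ\Psi:2^{\omega}\to H$ using Baire category. First I would invoke DC (countable choices suffice) to pass to the subsequence of those $g_n$ with $\phi(g_n)\neq 1_H$; both $g_n\searrow 1_G$ and deep concatenability transfer to any subsequence, so I may assume $\phi(g_n)\neq 1_H$ for every $n$. For each $\sigma\in 2^{\omega}$ the hypothesis produces a well-defined $\Psi(\sigma)\in G$ (resp.\ in $K$), namely the formal product of those $g_n$ (resp.\ concatenation of those $l_n$) with $\sigma(n)=1$, taken in the natural order. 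Under $(*)$ every subset of $2^{\omega}$ and of $(2^{\omega})^{2}$ has the Baire property, so in particular so does every fiber $f^{-1}(h)$.

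The argument then splits according to whether some fiber is non-meager. \emph{Case A:} some $f^{-1}(h_0)$ is non-meager. Applying the Pettis theorem in the Polish group $(2^{\omega},\triangle)$, the set $f^{-1}(h_0)\triangle f^{-1}(h_0)$ contains an open neighborhood of $0$, namely a cylinder $V_N=\{\delta\in 2^{\omega}:\delta(n)=0\text{ for all }n<N\}$. For any $m\geq N$ the indicator $\chi_{\{m\}}$ lies in $V_N$, so there exist $\tau_1,\tau_2\in f^{-1}(h_0)$ agreeing off $\{m\}$. Factoring $\Psi(\tau_i)$ at coordinate $m$ yields $\Psi(\tau_1)^{-1}\Psi(\tau_2)=B^{-1}g_m^{\pm 1}B$ for some $B\in G$, and applying $\phi$ with $f(\tau_1)=f(\tau_2)$ forces $\phi(g_m)=1_H$, contradicting the reduction above. \emph{Case B:} every fiber is meager. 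Let $R=\{(\sigma,\tau)\in(2^{\omega})^{2}:\sigma\neq\tau,\ f(\sigma)=f(\tau)\}$. The vertical section $R_\sigma=[\sigma]\setminus\{\sigma\}$ is meager for every $\sigma$, so Kuratowski--Ulam (applicable under $(*)$) renders $R$ meager in $(2^{\omega})^{2}$. Mycielski's theorem, valid under DC+BP via the usual fusion argument after enclosing $R$ in a meager Borel superset, now provides a perfect $P\subseteq 2^{\omega}$ with $f|_P$ injective, whence $|2^{\omega}|=|P|\leq|H|$.

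The main obstacle is the factorization identity in Case A: I need the products defining $\Psi(\tau)$ to admit an associative splitting $\Psi(\tau)=A\cdot g_m^{\tau(m)}\cdot B$ with $A$ depending only on the coordinates $n<m$ of $\tau$ and $B$ only on the coordinates $n>m$. This is where the precise definitions of $\searrow$ and ``deeply concatenable'' do the real work, and their convergence and associativity content must be unpacked to justify the identity $\Psi(\tau_1)^{-1}\Psi(\tau_2)=B^{-1}g_m^{\pm 1}B$. For part (2) the same template applies with $l_n$ in place of $g_n$; the group identity is replaced by the homotopy class $[\overline{B}\ast l_m^{\pm 1}\ast B]$, again conjugate to $[l_m^{\pm 1}]$ in $K$, and the Baire-category portion of the argument is unchanged.
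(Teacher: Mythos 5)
Your proposal is correct and follows essentially the same route as the paper: reduce to $\phi(g_n)\neq 1_H$ for all $n$, show that the relation ``same image under $\phi\circ\Psi$'' can have no non-meager class because two sequences differing in exactly one coordinate $m$ would, via the factorization $A\,g_m^{\pm1}B$ (justified by continuity of multiplication and Hausdorff uniqueness of limits, exactly as in the paper's displayed computation), force $\phi(g_m)=1_H$, and then conclude by Kuratowski--Ulam and Mycielski. The only cosmetic difference is that you invoke Pettis's theorem in the Boolean group $(2^{\omega},\triangle)$ where the paper's Lemma \ref{equivclass} uses a single coordinate-flipping involution on a cylinder on which the class is comeager; the two arguments are interchangeable under ZF + DC + BP.
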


\noindent One immediate consequence of Theorem \ref{thebigone} is a strong affirmative answer to a question of Cannon and Conner regarding fundamental groups (Corollary \ref{CannonConner}). 

In Section \ref{FromA} further consequences of Theorem \ref{thebigone} are presented.  We recover the result of Blass \cite{Bl} that the group of bounded sequences in $\mathbb{Z}^{\omega}$, classically known to be free abelian under ZFC \cite{No}, is not free under $(*)$.  Additionally, two subgroups of the Hawaiian earring group fail to be free under $(*)$.  These are the so-called non-abelian Specker group \cite{Z} and the subgroup of scattered words \cite{CC}, both of which are free assuming choice.

Theorem \ref{thebigone} also has applications to the theory of slender groups.  A consequence of $(*)$ is that every nonslender abelian group $A$ satisfies $|2^{\omega}| \leq |A|$ (Theorem \ref{slender}).  Contrast this with the situation under the axiom of choice: any group including $\mathbb{Q}$ or torsion fails to be slender.  The comparable statement and contrast hold for the noncommutatively slender groups. 

In Section \ref{Length} we prove a result which is similar in flavor to Theorem \ref{thebigone}:
\begin{bigtheorem}\label{lengthfunctiontheorem}  $(*)$ The following hold:

\begin{enumerate}  
\item  Suppose $G$ is a Hausdorff topological group, $g_n\searrow 1_G$ and $L$ is a length function on $G$ (see Definition \ref{length}).  The sequence $(L(g_n))_{n\in \omega}$ is bounded.

 \item  Suppose $(l_n)_{n\in \omega}$ is deeply concatenable for $K\leq \pi_1(X, x)$ and $L$ is a length function on $K$.  The sequence $(L([l_n]))_{n\in \omega}$ is bounded.
\end{enumerate}
\end{bigtheorem}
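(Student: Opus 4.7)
The plan is to mirror the proof of Theorem~\ref{thebigone}, substituting the length function $L$ for the homomorphism $\phi$ and extracting a uniform bound from a Baire category argument. Since part~(2) is formally identical to (1) once the deeply concatenable $(l_n)$ plays the role of $g_n\searrow 1_G$ in defining the coding, I concentrate on (1).

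Assume for contradiction that $(L(g_n))_{n\in\omega}$ is unbounded. Using DC, pass to a subsequence (still denoted $g_n$) along which $L(g_n)\to\infty$; the descent hypothesis $g_n\searrow 1_G$ is preserved under subsequences. As in Theorem~\ref{thebigone}, for $S\in 2^{\omega}$ the ordered product $f(S):=\prod_{n\in S}g_n$ converges in $G$, yielding a map $f:2^{\omega}\to G$. Set $\psi:=L\circ f:2^{\omega}\to[0,\infty)$. Under $(*)$ every subset of $2^{\omega}$ has the Baire property, so each sublevel set $\psi^{-1}([0,m])$ has the BP and they cover $2^{\omega}$; by Baire category some $\psi^{-1}([0,m_0])$ is comeager in a basic clopen $[s]$.

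For $n>|s|$ the bit-flip involution $T_n:S\mapsto S\oplus\{n\}$ is a category-preserving homeomorphism that stabilizes $[s]$, so $\psi^{-1}([0,m_0])\cap T_n\bigl(\psi^{-1}([0,m_0])\bigr)$ is comeager in $[s]$. Picking any $S$ in this intersection, a routine computation yields
\[
f(S)^{-1}f(S\oplus\{n\}) = B^{-1}\,g_n^{\pm 1}\,B, \qquad B:=\prod_{\substack{k>n\\ k\in S}}g_k,
\]
with both $L(f(S))$ and $L(f(S\oplus\{n\}))$ at most $m_0$. Combining subadditivity and inverse invariance of $L$ with the conjugation control that Definition~\ref{length} is assumed to supply then forces $L(g_n)\leq 2m_0$ for all sufficiently large $n$, contradicting $L(g_n)\to\infty$.

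The main obstacle, and the only place where the exact form of Definition~\ref{length} matters, is the final passage from a bound on $L(B^{-1}g_n^{\pm 1}B)$ to one on $L(g_n)$. If length functions are required to be conjugation-invariant this is immediate; otherwise I would refine the comeager set by a second BP argument that simultaneously bounds the length of the tail $B$, or replace the bit-flip by an involution whose action at level $n$ leaves the tail product trivial, so that subadditivity absorbs the conjugators at no cost.
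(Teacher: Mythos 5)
Your skeleton---covering $2^{\omega}$ by the sublevel sets of $\sigma\mapsto L(g^{\sigma})$, locating one that is comeager in a basic clopen set, and comparing $g^{\sigma}$ with its bit-flips---is the same as the paper's, but the step you yourself flag as ``the main obstacle'' is a genuine gap, and it is exactly where the paper's proof does its real work. Definition \ref{length} does \emph{not} impose conjugation invariance (only $L(1_G)=0$, $L(g)=L(g^{-1})$ and subadditivity), so from $L(B^{-1}g_n^{\pm 1}B)\leq 2m_0$ you can only extract $L(g_n)\leq 2m_0+2L(B)$, and $B$ is an infinite tail product whose length you have not controlled. Of your two suggested repairs, the second does not work as stated: to make the tail product trivial you must flip every $1$-bit of $S$ beyond position $n$, an involution that depends on $S$ and is generically supported on an infinite set, so there are continuum many such maps and you cannot intersect your comeager set over all of them. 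The first repair is the correct one, and it is what the paper actually does: intersect $\psi^{-1}([0,m_0])$ with its images under \emph{all} finite bit-flips $f_F$, where $F$ ranges over the finite subsets of $[\,|s|,\infty)$---a countable family, so the intersection remains comeager in $[s]$ and in particular nonempty. For a point $\sigma$ of this intersection and $n>|s|$, flipping off the block of $1$-bits of $\sigma$ in positions $|s|,\dots,n$ exhibits the tail as $B=A^{-1}g^{f_F(\sigma)}$ with $A$ the fixed head over the first $|s|$ coordinates, whence $L(B)\leq m_0+\sum_{m<|s|}L(g_m)$ uniformly in $n$; only then does your inequality contradict $L(g_n)\to\infty$. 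Until that intersection-over-all-finite-flips step is carried out, the proof is incomplete.

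It is worth noting that, once repaired, your variant is marginally cleaner than the paper's. The paper decomposes $g^{\sigma}$ as $(\text{head up to }N)\cdot g_N\cdot(\text{tail})$, so its bound on $L(g_N)$ picks up the term $\sum_{m<N}L(g_m)$, forcing the diagonal choice of subsequence with $L(g_N)>N+\sum_{m<N}L(g_m)$ to reach a contradiction. Your cancellation $f(S)^{-1}f(S\oplus\{n\})=B^{-1}g_n^{\pm 1}B$ eliminates the head entirely, so with the tail bounded as above you get $L(g_n)\leq 4m_0+2\sum_{m<|s|}L(g_m)$ for all $n>|s|$, a bound uniform in $n$, and the naive subsequence with $L(g_n)\to\infty$ suffices. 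But that advantage is only available after the missing tail estimate is supplied.
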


\noindent One consequence of Theorem \ref{lengthfunctiontheorem} is that $(*)$ implies that the class of strongly bounded groups (see \cite{dC}) is closed under countable products.

\end{section}

\begin{section}{Theorem \ref{thebigone} and some consequences}\label{TheoremA}
We give a brief review of some topological concepts (see \cite{Sr}).  If $X$ is a topological space we say $Y\subseteq X$ is \emph{nowhere dense} in $X$ provided the closure of $Y$ has empty interior.  A set $Y\subseteq X$ is \emph{meager} in $X$ if $Y$ is a union of countably many nowhere dense subsets of $X$, and $Y \subseteq X$ is \emph{comeager} in $X$ provided $X\setminus Y$ is meager in $X$.  Furthermore, $Y \subseteq X$ has the \emph{Baire property} provided there exists an open set $U\subseteq X$ such that the symmetric difference $Y \Delta U = (Y \cup U)\setminus (Y \cap U)$ is meager in $X$.  A space in which a countable intersection of dense open sets is dense is a \emph{Baire space}.  A nonempty open set in a Baire space cannot be meager.

A space is \emph{Polish} if it is separable and completely metrizable.  Polish spaces are second countable and are Baire spaces by Theorem \ref{Baire} below.  The Cantor space $2^{\omega} = \{0, 1\}^{\omega}$ consisting of all sequences of $0$s and $1$s, topologized by the product topology on the discrete space $\{0, 1\}$, is an example of a Polish space.  The metric $d(\sigma_0, \sigma_1) = (\min\{n\in \omega\mid \sigma_0(n) \neq \sigma_1(n)\})^{-1}$ is a complete metric which generates the topology on $2^{\omega}$ and the set of eventually $0$ sequences is a countable dense subset.  A basis for the topology is given by the collection of open sets of form $U(\langle s_0, \ldots, s_{m-1}\rangle) = \{\sigma\in 2^{\omega}\mid (\forall k<m)(\sigma(k) = s_k)\}$.  By interspersing coordinates it is easy to see that $2^{\omega}$ is homeomorphic to the space $2^{\omega} \times 2^{\omega}$.

We state some classical theorems that will be used in our proof of Theorem \ref{thebigone}.  As indicated, each of these results is a consequence of ZF + DC.  Theorem \ref{Baire} is standard; a proof can be found in \cite[Theorem 2.2.5]{Sr}.  Theorem \ref{KU} appeared in \cite{KU} and a proof can be found in \cite[Theorem 3.5.16]{Sr}.  Theorem \ref{Myc} essentially comes from \cite{M} and a short proof is given in \cite[Theorem 5.13.10]{Sr}.

\begin{theorem}\label{Baire} (ZF + DC)  Completely metrizable spaces are Baire spaces.
\end{theorem}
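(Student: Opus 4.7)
The plan is to adapt the classical nested-ball proof of the Baire category theorem, taking care to pinpoint the single use of DC. Fix a complete metric $d$ generating the topology on $X$, a countable family $\{U_n\}_{n\in\omega}$ of dense open subsets of $X$, and a nonempty open set $V$. The goal is to produce a point of $V \cap \bigcap_{n\in\omega} U_n$, which is enough to conclude that this countable intersection is dense and hence that $X$ is a Baire space.

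First I would set up an inductive construction of nested closed balls. For the base step, since $V$ is nonempty open and $U_0$ is dense, $V \cap U_0$ is nonempty open and so contains a closed ball $\overline{B}(x_0, r_0)$ with $0 < r_0 < 1$. For the inductive step, given $(x_n, r_n)$ with $\overline{B}(x_n, r_n) \subseteq V \cap U_0 \cap \cdots \cap U_n$ and $0 < r_n < 2^{-n}$, density of $U_{n+1}$ forces $B(x_n, r_n) \cap U_{n+1}$ to be nonempty open, and thus there exist $(x_{n+1}, r_{n+1})$ with $\overline{B}(x_{n+1}, r_{n+1}) \subseteq B(x_n, r_n) \cap U_{n+1}$ and $0 < r_{n+1} < 2^{-(n+1)}$. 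The production of the entire sequence $\{(x_n, r_n)\}_{n\in\omega}$ requires infinitely many such choices, and this is exactly where DC is invoked: apply it to the set of finite initial segments $((x_0, r_0), \ldots, (x_n, r_n))$ satisfying the above containment conditions, under the binary relation of admissible one-step extension.

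Finally, the sequence $\{x_n\}$ is Cauchy because $x_m \in B(x_n, r_n)$ for all $m \geq n$ and $r_n < 2^{-n}$, so completeness of $d$ yields a limit $x \in X$. Since the closed balls are nested, $x \in \overline{B}(x_n, r_n) \subseteq U_n$ for each $n$, and also $x \in \overline{B}(x_0, r_0) \subseteq V$, so $x$ witnesses the required nonemptiness. The only real delicacy is the DC step: the construction must be phrased so that a single application of DC (rather than a tacit appeal to countable choice at each stage) suffices, which is the reason for organizing the recursion around finite initial segments of the ball sequence.
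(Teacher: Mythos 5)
Your proof is correct and is essentially the argument the paper defers to (it cites \cite[Theorem 2.2.5]{Sr} rather than proving the theorem itself): the classical nested-closed-ball construction, with DC applied to the tree of admissible finite initial segments to produce the sequence. Your isolation of the single DC application via finite partial constructions is exactly the right way to keep the argument within ZF + DC.
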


\begin{theorem}\label{KU} (ZF + DC)  Let $X, Y$ be second countable Baire spaces.  If $A \subseteq X\times Y$ has the Baire property then the following are equivalent:

\begin{enumerate}  \item $A$ is meager in $X\times Y$

\item $\{x\in X\mid \{y\in Y\mid (x, y)\in A\}\text{ is meager in } Y\}$ is comeager in $X$

\end{enumerate}  
\end{theorem}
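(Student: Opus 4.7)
The plan is to derive both implications from a single nowhere-dense section lemma: if $F \subseteq X \times Y$ is closed and nowhere dense, then $\{x \in X : F_x \text{ is nowhere dense in } Y\}$ is comeager in $X$, where $F_x = \{y : (x,y) \in F\}$. To prove this, set $W = (X \times Y) \setminus F$, which is open and dense, and fix a countable basis $\{V_n\}_{n \in \omega}$ of $Y$ using second countability. For each $n$, put $W_n = \proj_X(W \cap (X \times V_n))$; the projection of an open set is open, and density of $W$ combined with nonemptiness of $U \times V_n$ for every nonempty open $U \subseteq X$ makes each $W_n$ dense in $X$. Then $\bigcap_n W_n$ is comeager in $X$, and for every $x$ in this intersection the section $W_x$ meets each $V_n$, so $W_x$ is dense in $Y$ and $F_x$ is nowhere dense.

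Extending to meager sets, given a decomposition $M = \bigcup_n N_n$ with each $N_n$ nowhere dense, the sets $C_n = \{x : (\overline{N_n})_x \text{ is nowhere dense in } Y\}$ are comeager by the lemma, and DC ensures their countable intersection $\bigcap_n C_n$ is comeager; on this intersection $M_x$ is meager since $N_x \subseteq (\overline{N_n})_x$. Direction (1) $\Rightarrow$ (2) follows at once: if $A$ is meager, choose such a decomposition of $A$ and apply the above.

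For (2) $\Rightarrow$ (1), invoke the Baire property to write $A = U \Delta M$ with $U \subseteq X \times Y$ open and $M$ meager. For comeager-many $x$, both $A_x$ and $M_x$ are meager in $Y$, so $U_x = A_x \Delta M_x$ is a meager open subset of the Baire space $Y$ and hence empty. If $U$ were nonempty it would contain a nonempty basic rectangle $B \times C$, so $U_x \supseteq C \neq \emptyset$ for every $x \in B$; but $B$ is nonempty open in the Baire space $X$, hence nonmeager, contradicting that $\{x : U_x = \emptyset\}$ is comeager. Therefore $U = \emptyset$, $A = M$, and $A$ is meager.

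The main obstacle is the section lemma for closed nowhere dense sets: the projection argument genuinely exploits second countability of $Y$, and one must track carefully that countable intersections of comeager sets remain comeager in the absence of full choice, which is exactly what DC provides. The remainder of the argument is clean bookkeeping against the Baire-property decomposition together with the elementary fact that a nonempty open subset of a Baire space cannot be meager.
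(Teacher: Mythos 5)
Your proof is correct: this is the classical Kuratowski--Ulam theorem, which the paper does not prove but simply cites (to [KU] and to Srivastava's book), and your argument --- the nowhere-dense section lemma via the countable basis of $Y$, its extension to meager sets, and the Baire-property decomposition $A = U \Delta M$ for the converse direction --- is essentially the standard proof given in those references, carried out correctly in ZF + DC. The only cosmetic point is that the basis $\{V_n\}$ should be taken to consist of nonempty sets so that each $W_n$ is indeed dense.
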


\begin{theorem}\label{Myc} (ZF + DC)  If $X$ is a Polish space and $E \subseteq X\times X$ is a meager equivalence relation then $2^{\omega}$ injects into the set of equivalence classes of $E$.
\end{theorem}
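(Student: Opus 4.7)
The plan is to build a Cantor scheme in $X$ whose body injects into the set of $E$-equivalence classes. First, since $E$ is meager, write $E = \bigcup_n A_n$ with each $A_n$ nowhere dense in $X \times X$; by replacing $A_n$ with $\bigcup_{k\le n} \overline{A_k}$ one may assume the $A_n$ are closed, nowhere dense, and increasing in $n$. Note that $X$ must be perfect, for otherwise an isolated point $x \in X$ would make $(x,x)$ an isolated point of $X \times X$ lying in the reflexive $E$, contradicting meagerness of $E$ in the Baire space $X \times X$ (Theorem \ref{Baire}).

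Fix a complete metric $d$ on $X$. Using DC, I would construct nonempty open sets $U_s \subseteq X$ for $s \in 2^{<\omega}$ satisfying: (i) $\overline{U_{s0}}$ and $\overline{U_{s1}}$ are disjoint and both contained in $U_s$; (ii) $\operatorname{diam}(U_s) < 2^{-|s|}$; (iii) $(U_s \times U_t) \cap A_n = \emptyset$ whenever $s \neq t$ in $2^n$. The base case is immediate. For the step from level $n$ to $n+1$, I first use perfectness of $X$ to pick, for each $s \in 2^n$, two distinct points of $U_s$ with small disjoint open-ball neighborhoods inside $U_s$, obtaining initial candidates $V_{s0}, V_{s1}$ satisfying the analogues of (i)--(ii). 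Then I process the finitely many ordered pairs $(t_1, t_2)$ of distinct elements of $2^{n+1}$ one at a time: since $A_{n+1}$ is closed and nowhere dense, $V_{t_1} \times V_{t_2}$ contains a nonempty open rectangle disjoint from $A_{n+1}$, and I shrink $V_{t_1}, V_{t_2}$ to its two factors; this preserves (i)--(ii) and every previously-secured instance of (iii), since those are open properties closed under shrinking. After all pairs are handled, set $U_t := V_t$ for $t \in 2^{n+1}$.

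Given the scheme, completeness of $d$ together with condition (ii) yields, for each $\sigma \in 2^{\omega}$, a unique point $x_\sigma \in \bigcap_n \overline{U_{\sigma|n}}$. For distinct $\sigma, \tau \in 2^{\omega}$ and any $m \in \omega$, pick $n \ge m$ with $\sigma|n \neq \tau|n$; by (i), $x_\sigma \in \overline{U_{\sigma|(n+1)}} \subseteq U_{\sigma|n}$ and similarly $x_\tau \in U_{\tau|n}$, so (iii) gives $(x_\sigma, x_\tau) \notin A_n$, hence $(x_\sigma, x_\tau) \notin A_m$. Thus $(x_\sigma, x_\tau) \notin E$, and the assignment $\sigma \mapsto [x_\sigma]_E$ is an injection of $2^{\omega}$ into the set of $E$-classes.

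The main obstacle is the inductive step: one must refine the $V_t$ sequentially rather than all at once, exploiting at each substep that $A_{n+1}$ is closed and nowhere dense in the product to find a smaller nonempty open rectangle disjoint from $A_{n+1}$, while verifying that such a refinement does not disturb the finitely many disjointness and diameter conditions already achieved. The role of DC is to carry out the single $\omega$-indexed choice of the entire scheme from a recursive rule whose instances are always nonempty by this closed-nowhere-dense geometry; no stronger form of choice is required.
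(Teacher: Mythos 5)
Your proof is correct. The paper does not actually prove Theorem \ref{Myc} itself---it quotes it as a classical result of Mycielski with a proof in \cite[Theorem 5.13.10]{Sr}---and your Cantor-scheme construction (closed increasing nowhere dense covers, perfectness from reflexivity, sequential shrinking of the finitely many rectangles at each level, DC for the $\omega$-recursion) is essentially that standard argument.
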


\begin{definition}  We say an equivalence relation $E \subseteq 2^{\omega}\times 2^{\omega}$ satisfies $(\dagger)$ if $|\{n\in \omega: \sigma_0(n) \neq \sigma_1(n)\}| = 1$ implies $\neg (\sigma_0E\sigma_1)$ (see \cite{Sh2}.)  If $E$ is an equivalence relation on $2^{\omega}$ and $\sigma \in 2^{\omega}$ we let $E\sigma$ denote the equivalence class of $\sigma$.
\end{definition}

\begin{lemma}\label{equivclass} $(*)$ If $E\subseteq 2^{\omega}\times 2^{\omega}$ satisfies $(\dagger)$ then $|2^{\omega}|\leq |\{E_{\sigma}\}_{\sigma\in 2^{\omega}}|$.
\end{lemma}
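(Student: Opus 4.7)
The plan is to reduce the lemma to Theorem \ref{Myc}: if I can show that $E$, viewed as a subset of the Polish space $2^{\omega}\times 2^{\omega}\cong 2^{\omega}$, is meager, then Theorem \ref{Myc} immediately produces the desired injection $2^{\omega}\hookrightarrow\{E\sigma\}_{\sigma\in 2^{\omega}}$. The whole of the argument therefore concentrates on proving meagerness of $E$, and the only real ingredients I expect to use are $(*)$ (to get Baire property for free), Kuratowski--Ulam, and the coordinate-flip homeomorphisms of $2^{\omega}$.

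I would argue by contradiction. Under $(*)$ every subset of $2^{\omega}\times 2^{\omega}$, and of $2^{\omega}$, has the Baire property, so in particular $E$ and every class $E\sigma$ do. If $E$ were nonmeager, Theorem \ref{KU} applied to the complement of the set $\{x:\{y:(x,y)\in E\}\text{ is meager}\}$ would force the existence of some $x_{0}\in 2^{\omega}$ with $Ex_{0}$ nonmeager. Since $Ex_{0}$ has the Baire property, $Ex_{0}\bigtriangleup V$ is meager for some nonempty open $V$, and I would shrink $V$ to a basic open set $W=U(\langle s_{0},\ldots,s_{m-1}\rangle)\subseteq V$ in which $Ex_{0}$ is comeager.

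The key step, and I think the only one that takes any cleverness, is to exploit $(\dagger)$ through the bit-flip homeomorphisms $T_{n}:2^{\omega}\to 2^{\omega}$ that change the $n$-th coordinate and leave the others alone. If $\sigma\in Ex_{0}$ then $\sigma$ and $T_{n}(\sigma)$ differ in exactly one place, so $(\dagger)$ together with transitivity of $E$ gives $T_{n}(\sigma)\notin Ex_{0}$; hence $T_{n}(Ex_{0})\cap Ex_{0}=\emptyset$ for every $n$. For $n\geq m$ the homeomorphism $T_{n}$ fixes the basic open set $W$ setwise and preserves meagerness, so $T_{n}(Ex_{0})$ is also comeager in $W$. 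In the Baire space $2^{\omega}$, two comeager subsets of the nonempty open set $W$ cannot be disjoint, yielding the contradiction. Then $E$ is meager and Theorem \ref{Myc} finishes the proof. The main obstacle is really just noticing that a coordinate flip at a sufficiently late index both preserves the chosen basic neighborhood and, by $(\dagger)$, moves an entire equivalence class off itself; once this is in hand, Kuratowski--Ulam, Baire category, and Theorem \ref{Myc} assemble the argument mechanically.
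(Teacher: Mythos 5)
Your proposal is correct and follows essentially the same route as the paper: both arguments hinge on taking a nonmeager equivalence class/section, localizing it to a basic open set via the Baire property, flipping a coordinate beyond the defining coordinates of that basic set, and using $(\dagger)$ to get two disjoint comeager-in-$U$ sets, contradicting Baire category; then Kuratowski--Ulam gives meagerness of $E$ and Mycielski's theorem finishes. The only difference is cosmetic --- you run the contradiction from ``$E$ nonmeager'' through Kuratowski--Ulam to a nonmeager section, while the paper first shows every class is meager and then applies Kuratowski--Ulam in the forward direction.
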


\begin{proof}  Assume $(*)$ and that $E\subseteq 2^{\omega}\times 2^{\omega}$ is an equivalence relation which satisfies $(\dagger)$.  Suppose $E_{\sigma}$ is not meager for some $\sigma\in 2^{\omega}$.  By $(*)$, $E_{\sigma}$ has the Baire property and we obtain a basic open set $U = U(\langle s_0, \ldots, s_{n-1}\rangle) \subseteq 2^{\omega}$ for which $U\setminus E_{\sigma}$ is meager in $2^{\omega}$.  The involution homeomorphism $f:2^{\omega} \rightarrow 2^{\omega}$ obtained by switching the $n$-th coordinate has $f(U) =U$ and preserves meagerness.  Since $U \setminus E_{\sigma}$ is meager in $2^{\omega}$, so is $f(U \setminus E_{\sigma})=U\setminus f(E_{\sigma})$.  Then $(U\setminus E_{\sigma})\cup (U\setminus f(E_{\sigma})) = U \setminus (E_{\sigma}\cap f(E_{\sigma}))$ is also meager in $2^{\omega}$.  However, $(\dagger)$ implies $E_{\sigma}\cap f(E_{\sigma}) =\emptyset$, so that $U$ is meager in $2^{\omega}$.  This contradicts the Baire category theorem for complete metric spaces.

Thus each equivalence class $E_{\sigma}$ is meager, and since $E$ has the Baire property in $2^{\omega}\times 2^{\omega}$ we see that $E$ is meager by Theorem \ref{KU}.  By Theorem \ref{Myc}, we get $|2^{\omega}|\leq |\{E_{\sigma}\}_{\sigma\in 2^{\omega}}|$.
\end{proof}

\begin{definition} \label{Hausdorffarrow} Given a sequence of group elements $(g_n)_{n\in \omega}$ in a Hausdorff topological group $G$ we write $g_n \searrow 1_G$ if for each $\sigma \in 2^{\omega}$ the sequence $(g_0^{\sigma(0)}\cdots g_{n-1}^{\sigma(n-1)})_{n\in \omega}$ converges.  By the Hausdorff condition, the limit of such a sequence is unique.  When $g_n\searrow 1_G$ and $\sigma \in 2^{\omega}$ we write $g^{\sigma} = \lim_{n\rightarrow \infty} g_0^{\sigma(0)}\cdots g_{n-1}^{\sigma(n-1)}$.
\end{definition}

\begin{definition} \label{Looparrow} Given a topological space $X$ and a sequence of loops $(l_n)_{n\in \omega}$ based at $x\in X$ we write $l_n \searrow x$ if for every neighborhood $U$ of $x$ the $l_n$ eventually have images inside of $U$.  In other words, $(\forall U)(\exists N\in \omega)(\forall n\geq N)(l_n([0, 1])\subseteq U)$ where $U$ is taken to range over open neighborhoods of $x$.  Given a loop $l$ at $x$ we define for $i\in \{0, 1\}$ the loop $l^i:[0, 1]\rightarrow X$ to be the constant loop at $x$ if $i = 0$ and to be $l$ otherwise.  Given a sequence $(l_n)_{n\in \omega}$ such that $l_n \searrow x$ and $\sigma\in 2^{\omega}$ we obtain a loop $l^\sigma$ by letting 

\begin{center}
$l^{\sigma}(t) = \begin{cases} l_n^{\sigma(n)}(2^{n+1}(t-1)+2)$ if $t\in [1-\frac{1}{2^n}, 1-\frac{1}{2^{n+1}}]\\x$ if $t = 1 \end{cases}$
\end{center}

\noindent The loop $l^{\sigma}$ may be regarded as an infinite concatenation of the sequence of loops $(l_n^{\sigma(n)})_{n\in \omega}$ since it passes through the loop $l_0^{\sigma(0)}$ over the interval $[0, \frac{1}{2}]$, through the loop $l_1^{\sigma(1)}$ over the interval $[\frac{1}{2}, \frac{3}{4}]$, etc.  If $l_n\searrow x$ and $K \leq \pi_1(X, x)$ we say $(l_n)_{n\in \omega}$ is \textit{deeply concatenable} for $K$ provided $[l^{\sigma}]\in K$ for all $\sigma\in 2^{\omega}$.
\end{definition}

\begin{proof}(of Theorem \ref{thebigone})  We prove (2) and the claim in (1) follows along almost precisely the same argument.  Assume the hypotheses.  Any subsequence of $(l_n)_{n\in \omega}$ must also be deeply concatenable for $K\leq \pi_1(X, x)$, and so without loss of generality we may assume that $[l_n]\notin \ker(\phi)$ for all $n\in \omega$.  Define $E \subseteq 2^{\omega} \times 2^{\omega}$ by letting $\sigma_0E\sigma_1$ if and only if $\phi([l^{\sigma_0}]) = \phi([l^{\sigma_1}])$.  We show $E$ satisfies $(\dagger)$ and conclude by applying Lemma \ref{equivclass}.

Supposing $\sigma_0$ and $\sigma_1$ differ at precisely the $n$-th coordinate and letting $\tau\in \{0, 1\}^{\omega}$ be given by $\tau(m) = \sigma(n+m+1)$ we notice that 

\begin{center}  $\phi([l^{\sigma_0}]) = \phi([l_0^{\sigma_0(0)}*\cdots *l_{n-1}^{\sigma_0(n-1)}])\phi([l_n^{\sigma_0(n)}])\phi([l^{\tau}])$
\end{center}

\noindent and

\begin{center}
$\phi([l^{\sigma_1}]) =\phi([l_0^{\sigma_1(0)}*\cdots* l_{n-1}^{\sigma_1(n-1)}])\phi([l_n^{\sigma_1(n)}])\phi([l^{\tau}])$

 $= \phi([l_0^{\sigma_0(0)}*\cdots *l_{n-1}^{\sigma_0(n-1)}])\phi([l_n^{\sigma_1(n)}])\phi([l^{\tau}])$
\end{center}

\noindent and if $\sigma_0E\sigma_1$ we cancel on the left and right by $\phi([l_0^{\sigma_0(0)}*\cdots* l_{n-1}^{\sigma_0(n-1)}])$ and $\phi([l^{\tau}])$ respectively and see that $\phi([l_n^{\sigma_0(n)}]) = \phi([l_n^{\sigma_1(n)}])$.  Thus $[l_n]\in \ker(\phi)$, contrary to assumption.  Then $E$ satisfies $(\dagger)$.
\end{proof}

\begin{theorem}\label{cmgroupsopenkernel} $(*)$  If $G$ is a completely metrizable group and $\phi: G \rightarrow H$ does not have open kernel then $|2^{\omega}| \leq |H|$.
\end{theorem}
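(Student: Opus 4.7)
The plan is to construct, using dependent choice, a sequence $(g_n)_{n\in\omega}$ in $G\setminus\ker(\phi)$ satisfying $g_n\searrow 1_G$ in the sense of Definition \ref{Hausdorffarrow}, and then apply Theorem \ref{thebigone}(1). Fix a complete metric $d$ on $G$ compatible with its topology.

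I would build the $g_n$ by recursion. Having chosen $g_0,\dots,g_{n-1}$, set
\[
S_n=\{\,g_0^{s(0)}g_1^{s(1)}\cdots g_{n-1}^{s(n-1)}:s\in\{0,1\}^n\,\},
\]
a finite subset of $G$. For each $x\in S_n$ the map $y\mapsto xy$ is continuous at $1_G$, so there is an open neighborhood $V_x$ of $1_G$ with $d(xy,x)<2^{-n}$ whenever $y\in V_x$. Finiteness of $S_n$ makes $V_n:=\bigcap_{x\in S_n}V_x$ an open neighborhood of $1_G$. Because $\ker(\phi)$ is a subgroup that is not open, no neighborhood of $1_G$ is contained in $\ker(\phi)$, so $V_n\setminus\ker(\phi)\neq\emptyset$. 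DC then produces $g_n\in V_n\setminus\ker(\phi)$ extending the construction.

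To verify $g_n\searrow 1_G$, fix $\sigma\in 2^{\omega}$ and put $p_n^{\sigma}=g_0^{\sigma(0)}\cdots g_{n-1}^{\sigma(n-1)}\in S_n$. Then $p_{n+1}^{\sigma}=p_n^{\sigma}g_n^{\sigma(n)}$ is either equal to $p_n^{\sigma}$ (when $\sigma(n)=0$) or satisfies $d(p_{n+1}^{\sigma},p_n^{\sigma})=d(p_n^{\sigma}g_n,p_n^{\sigma})<2^{-n}$ by the choice of $g_n$. For $m>n$ the triangle inequality gives $d(p_m^{\sigma},p_n^{\sigma})<2^{-n+1}$, so $(p_n^{\sigma})_{n\in\omega}$ is $d$-Cauchy and hence convergent by completeness. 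Thus $g_n\searrow 1_G$. Since no $g_n$ lies in $\ker(\phi)$, in particular the $g_n$ are not eventually in $\ker(\phi)$, and Theorem \ref{thebigone}(1) yields $|2^{\omega}|\leq|H|$.

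The main obstacle is that the complete metric $d$ need not be invariant under left or right translation, so choosing $g_n$ merely small in $d$ gives no control over the tails of the partial products $p_n^{\sigma}$. The trick above sidesteps this by tailoring the neighborhood $V_n$ from which $g_n$ is drawn to the finitely many partial products already formed, invoking only continuity of left multiplication at $1_G$.
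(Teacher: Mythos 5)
Your proposal is correct and follows essentially the same route as the paper: a recursive construction of $g_n\in U_n\setminus\ker(\phi)$ where the neighborhood $U_n$ is chosen so that all $2^n$ partial products $g_0^{s_0}\cdots g_{n-1}^{s_{n-1}}$ move by less than $2^{-n}$ under right multiplication, yielding Cauchy partial products and hence $g_n\searrow 1_G$, followed by an appeal to Theorem \ref{thebigone}(1). Your explicit remark about the lack of translation invariance of the metric is exactly the issue the paper's choice of $U_n$ is designed to handle.
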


\begin{proof}  Let $\phi:G \rightarrow H$ be a homomorphism such that $\ker(\phi)$ is not open.  Let $d$ be a complete metric compatible with the topoogy of $G$.  Let $g_0 \in G\setminus \ker(\phi)$.  Select a neighborhood $U_1$ of $1_G$ such that $g\in U_1$ implies both

\begin{center}  $d(g_0g, g_0) \leq \frac{1}{2}$
\end{center}

\noindent and

\begin{center}  $d(g, 1_G) \leq \frac{1}{2}$
\end{center}

Select $g_1\in U_1\setminus \ker(\phi)$.  Select open neighborhood $U_2$ of $1_G$ such that $g\in U_2$ implies all of the following:

\begin{center}  $d(g_0g_1 g, g_0g_1)\leq\frac{1}{4}$

$d(g_0g, g_0)\leq \frac{1}{4}$

$d(g_1g, g_1)\leq \frac{1}{4}$

$d(g, 1_G) \leq \frac{1}{4}$
\end{center}

\noindent and select $g_2 \in U_2\setminus \ker(\phi)$.  In general, assuming we have selected $g_0, \ldots, g_{n-1}$ and $U_1, \cdots, U_{n-1}$ in this way, we select open neighborhood $U_n$ of $1_G$ such that $g\in U_n$ implies that for all $\langle s_0, s_1, \ldots, s_{n-1}\rangle$ with $s_i \in \{0, 1\}$ we get

\begin{center}
$d(g_0^{s_0}g_1^{s_1}\cdots g_{n-1}^{s_{n-1}}g, g_0^{s_0}g_1^{s_1}\cdots g_{n-1}^{s_{n-1}})\leq 2^{-n}$
\end{center}

\noindent and select $g_n\in U_n \setminus \ker(\phi)$.  For every $\sigma\in 2^{\omega}$ the sequence $(g_0^{\sigma(0)}\cdots g_n^{\sigma(n)})_{n\in \omega}$ is Cauchy, and therefore convergent.  Then $g_n\searrow 1_G$ and $g_n \notin \ker(\phi)$.  We finish by applying Theorem \ref{thebigone} part (1).
\end{proof}

A further consequence of Theorem \ref{thebigone} is the following:
 
\begin{corollary}\label{fundamentalgroup} $(*)$ Suppose $X$ is a space which is first countable at $x\in X$.  Suppose also that for the homomorphism $\phi: \pi_1(X, x) \rightarrow H$ there is no neighborhood $U$ of $x$ for which each loop $l$ at $x$ in $U$ satisfies $[l]\in \ker(\phi)$.  Then $|2^{\omega}| \leq |H|$.
\end{corollary}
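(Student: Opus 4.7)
The plan is to reduce the corollary directly to Theorem \ref{thebigone}(2) by building a deeply concatenable sequence of loops at $x$ whose classes avoid $\ker(\phi)$.

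First I would use first countability at $x$ to fix a decreasing neighborhood base $\{U_n\}_{n\in \omega}$ of $x$ with $U_{n+1}\subseteq U_n$. The hypothesis says that for \emph{every} open neighborhood $U$ of $x$ there is some loop $l$ in $U$ based at $x$ with $[l]\notin \ker(\phi)$. In particular each $U_n$ contains such a loop, so by DC (one choice per coordinate from a nonempty set) I can select a sequence of loops $(l_n)_{n\in \omega}$ with $l_n([0,1])\subseteq U_n$ and $[l_n]\notin \ker(\phi)$ for every $n$.

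Next I would verify that $l_n\searrow x$ in the sense of Definition \ref{Looparrow}: given any open neighborhood $V$ of $x$, pick $N$ with $U_N\subseteq V$; then for all $n\geq N$ we have $l_n([0,1])\subseteq U_n\subseteq U_N\subseteq V$. With this in hand, the infinite concatenation $l^{\sigma}$ from Definition \ref{Looparrow} is a genuine continuous loop at $x$ for every $\sigma\in 2^{\omega}$, since continuity at the endpoint $t=1$ is exactly the statement that the images of the $l_n$ shrink into every neighborhood of $x$. Consequently $[l^{\sigma}]\in \pi_1(X,x)$ for all $\sigma$, so the sequence $(l_n)_{n\in \omega}$ is deeply concatenable for the subgroup $K=\pi_1(X,x)$ itself.

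At that point all the hypotheses of Theorem \ref{thebigone}(2) are satisfied: $(l_n)$ is deeply concatenable for $K$, and $[l_n]\notin \ker(\phi)$ for every $n$ (so certainly not eventually in the kernel). The conclusion $|2^{\omega}|\leq |H|$ is immediate.

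There is no serious obstacle here; the only nontrivial point is recognizing that the corollary's hypothesis is tailored precisely to supply, neighborhood by neighborhood, loops whose classes lie outside $\ker(\phi)$, and that first countability converts this into a single sequence $l_n\searrow x$ to which Theorem \ref{thebigone}(2) applies with $K$ equal to the whole fundamental group.
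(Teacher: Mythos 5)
Your proposal is correct and follows essentially the same route as the paper: take a (nested) countable neighborhood base at $x$, choose $l_n$ in $U_n$ with $[l_n]\notin\ker(\phi)$, observe $l_n\searrow x$ and deep concatenability for $K=\pi_1(X,x)$, and invoke Theorem \ref{thebigone}(2). Your added remarks on using DC for the selections and on arranging the base to be decreasing so that $l_n\searrow x$ holds are just explicit versions of steps the paper leaves implicit.
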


\begin{proof}  Assume the hypotheses.  Let $(U_n)_{n\in \omega}$ be a basis of open neighborhoods at $x$.  For each $n\in\omega$ select $l_n$ with image in $U_n$ and $[l_n]\notin\ker(\phi)$.  We have $l_n \searrow x$ and certainly $(l_n)_{n\in \omega}$ is deeply concatenable for $\pi_1(X, x)$.  Apply Theorem \ref{thebigone} part (2).
\end{proof}

James Cannon and Greg Conner have asked the following:
\vspace{.15in}

\emph{If $X$ is a separable, locally path connected metric space and there exists $x\in X$ at which there is an essential loop which may be homotoped fixing the endpoints at $x$ to have arbitrarily small diameter, is it the case that $\pi_1(X, x)$ is uncountable?}
\vspace{.15in}

\noindent Though this problem is open as of this writing even in the case $X$ is Borel, we get the following strengthening of the affirmative answer under $(*)$:

\begin{corollary}\label{CannonConner}$(*)$  If $X$ is first countable at $x$ and each neighborhood of $x$ contains an essential loop at $x$, then $|2^{\omega}| \leq |\pi_1(X, x)|$.
\end{corollary}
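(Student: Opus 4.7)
The plan is to derive this as an almost immediate consequence of Corollary \ref{fundamentalgroup}, applied to the identity homomorphism on $\pi_1(X,x)$.

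First, I would set $H = \pi_1(X, x)$ and take $\phi: \pi_1(X, x) \to H$ to be the identity homomorphism, so that $\ker(\phi) = \{1_H\}$. Under this choice, the hypothesis of Corollary \ref{fundamentalgroup} asks that there be no neighborhood $U$ of $x$ such that every loop $l$ at $x$ with image in $U$ satisfies $[l] = 1$, i.e., such that every loop in $U$ is nullhomotopic in $X$ rel endpoints. This is precisely the contrapositive of the assumption that every neighborhood of $x$ contains an essential loop at $x$.

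Next, since $X$ is first countable at $x$ by hypothesis, the remaining hypotheses of Corollary \ref{fundamentalgroup} are met, so I would invoke it to obtain $|2^{\omega}| \leq |H| = |\pi_1(X, x)|$, which is the desired conclusion.

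There is essentially no obstacle here: the corollary is designed to specialize to this statement. The only thing worth pointing out is that, unwinding the proof of Corollary \ref{fundamentalgroup}, one can emphasize that a basis $(U_n)_{n \in \omega}$ of neighborhoods at $x$ together with essential loops $l_n$ with image in $U_n$ produces a deeply concatenable sequence for $\pi_1(X, x)$ whose classes avoid the trivial element; then Theorem \ref{thebigone}(2) applied to the identity yields the cardinality bound directly, but invoking Corollary \ref{fundamentalgroup} as a black box is cleaner.
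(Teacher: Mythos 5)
Your proposal is correct and follows exactly the paper's own proof, which likewise applies Corollary \ref{fundamentalgroup} to the identity map $\pi_1(X,x)\rightarrow\pi_1(X,x)$ after observing that the trivial-kernel hypothesis translates precisely into the assumption that every neighborhood of $x$ contains an essential loop.
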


\begin{proof}  Apply Corollary \ref{fundamentalgroup} to the identity map $\pi_1(X, x) \rightarrow \pi_1(X, x)$.
\end{proof}

\end{section}

\begin{section}{Unfreeness and slenderness}\label{FromA}

We give further consequences of Theorem \ref{thebigone} which are related to algebraic freeness and to slenderness.  Under the algebraic freeness theme, a subgroup of $\mathbb{Z}^{\omega}$ as well as two subgroups of the Hawaiian earring group are shown not to be free if one assumes $(*)$.  Similarly, numerous subspaces of $\mathbb{Q}^{\omega}$ are shown not to have a Hamel basis.  Assumption $(*)$ also implies that (noncommutativeley) slender groups must be uncountable.  We begin with the straightforward:

\begin{proposition}\label{nosmallretract}  $(*)$ The following hold:
\begin{enumerate}

\item  If $G$ is a Hausdorff topological group, $g_n\searrow 1_G$ with $g_n\neq 1_G$ for all $n\in \omega$, and $H\leq G$ is countable such that $\{g_n\}_{n\in \omega} \subseteq H$, then $H$ is not a retract of $G$.

\item  If $(l_n)_{n\in \omega}$ is deeply concatenable for $K\leq \pi_1(X, x)$, the $[l_n]$ are each nontrivial, and $H \leq K$ is countable such that $\{[l_n]\}_{n\in \omega} \subseteq H$, then $H$ is not a retract of $K$.

\end{enumerate}
\end{proposition}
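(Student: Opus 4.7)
The plan is to derive both statements directly from Theorem \ref{thebigone} by taking $\phi$ to be a retraction $r \colon G \to H$ (respectively $r \colon K \to H$). The essential observation is that $r$ restricts to the identity on $H$, so every nontrivial element of $H$ lies outside $\ker(r)$.

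For part (1), I would suppose toward contradiction that $H$ is a retract of $G$, witnessed by a retraction $r \colon G \to H$. Since each $g_n \in H$ and $g_n \neq 1_G$, we have $r(g_n) = g_n \neq 1_G$, so in fact no term of $(g_n)_{n \in \omega}$ lies in $\ker(r)$, and in particular the sequence is not eventually in $\ker(r)$. The hypothesis $g_n \searrow 1_G$ is a property of the topology of $G$ alone and survives unchanged. Theorem \ref{thebigone}(1) applied to $\phi = r$ then yields $|2^{\omega}| \leq |H|$, contradicting the assumption that $H$ is countable (since $2^{\omega}$ is uncountable in ZF by Cantor's diagonal argument).

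Part (2) is handled by the identical maneuver. If $r \colon K \to H$ were a retraction, then since each $[l_n] \in H$ is nontrivial, $r([l_n]) = [l_n] \neq 1$ in $\pi_1(X, x)$, so no $[l_n]$ sits in $\ker(r)$. Deep concatenability of $(l_n)_{n\in\omega}$ for $K$ is assumed outright and depends only on $K$ and the sequence of loops, not on the homomorphism at hand. Hence the hypotheses of Theorem \ref{thebigone}(2) are met with $\phi = r$, giving $|2^{\omega}| \leq |H|$ and the same contradiction with countability.

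The main conceptual step is recognizing that Theorem \ref{thebigone} already does all the work: the proposition is simply a reformulation of that theorem in the language of retracts, where the retraction is used both to produce the homomorphism and to guarantee that the given sequence escapes its kernel. I do not anticipate a genuine obstacle — the only items to verify are that the sequence hypotheses ($g_n \searrow 1_G$, deep concatenability) are intrinsic to the ambient group and that $r\restriction H = \mathrm{id}_H$ forces nontrivial elements of $H$ to have nontrivial image.
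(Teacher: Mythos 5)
Your proof is correct and is exactly the argument the paper intends; the paper's own proof is the one-line remark that ``the retraction would imply a contradiction to Theorem \ref{thebigone},'' and you have simply supplied the details (the retraction fixes the nontrivial $g_n$ resp.\ $[l_n]$, so they avoid the kernel, and $|2^{\omega}|\leq|H|$ contradicts countability of $H$).
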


\begin{proof}  In either case the retraction would imply a contradiction to Theorem \ref{thebigone}.
\end{proof}

We recall the definition of a graph product of groups (see \cite{G}).  Let $\Gamma = (V, E)$ be a graph (we allow the sets of vertices and edges to be of arbitrary cardinality but do not allow an edge to connect a vertex to itself) and to each vertex $v\in V$ we associate a group $G_v$.  The graph product $\Gamma(\{G_v\}_{v\in V})$ is defined by taking the quotient of the free product $\ast_{v\in V} G_v$ by the normal closure of the set $\{[g_{v_0}, g_{v_1}]\}_{g_{v_0} \in G_{v_0}, g_{v_1} \in G_{v_1}, \{v_0, v_1\}\in E}$.  Thus free products of groups and direct sums of groups are examples of graph products of groups, with the graphs having either no edges or being complete in the respective cases.  A graph product is a \emph{right angled Artin group} if $G_v\simeq \mathbb{Z}$ for all $v\in V$.

\begin{corollary}  \label{nographproduct} $(*)$ The following hold:

\begin{enumerate}
\item  If $G$ is a Hausdorff topological group, $g_n\searrow 1_G$ with $g_n\neq 1_G$ for all $n\in \omega$, then $G$ is not a graph product of countable groups.

\item  If $(l_n)_{n\in \omega}$ is deeply concatenable for $K\leq \pi_1(X, x)$ and the $[l_n]$ are each nontrivial, then $K$ is not a graph product of countable groups.

\item   Suppose $\{H_m\}_{m\in \omega}$ is a collection of groups and $(h_m)_m \in \prod_{m\in \omega}H_m$ is such that $h_m \neq 1_{H_m}$.  There does not exist a right angled Artin group $K \leq \prod_{m\in \omega}H_m$ including the set of sequences $\{(h_m^{\sigma(m)})_{m\in \omega}\}_{\sigma\in 2^{\omega}}$.
\end{enumerate}
\end{corollary}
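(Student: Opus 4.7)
The plan is to prove all three parts via a single strategy: combining Proposition \ref{nosmallretract} with the observation that a graph product of countable groups contains every countable subset inside a countable sub-graph-product that is a retract.

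For (1), I would assume for contradiction that $G = \Gamma(\{G_v\}_{v \in V})$ is a graph product of countable groups. Each $g_n$ has a well-defined finite support $V_n \subseteq V$, namely the set of vertices appearing in a normal form for $g_n$, so $V' := \bigcup_n V_n$ is a countable union of finite sets and hence countable under DC. The sub-graph-product $G' := \Gamma(\{G_v\}_{v \in V'}) \leq G$ is then a countable subgroup containing every $g_n$. The assignment sending each element of $G_v$ for $v \in V'$ identically and each element of $G_v$ for $v \notin V'$ to $1$ respects every commutation relation of the graph product and therefore extends to a retraction $r: G \to G'$, contradicting Proposition \ref{nosmallretract}(1). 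Part (2) follows by exactly the same argument, invoking Proposition \ref{nosmallretract}(2) in place of (1).

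For (3), I would first topologize $\prod_{m\in\omega} H_m$ with the product of the discrete topologies, which yields a Hausdorff topological group. Writing $f_\sigma := (h_m^{\sigma(m)})_{m \in \omega}$ and setting $e_m := f_{\chi_{\{m\}}} \in K$, each $e_m$ is nontrivial since $h_m \neq 1_{H_m}$, and $e_0^{\sigma(0)} \cdots e_{n-1}^{\sigma(n-1)}$ agrees with $f_\sigma$ on the first $n$ coordinates and is trivial on later ones, hence converges to $f_\sigma \in K$ in the product topology. Thus $e_m \searrow 1_K$ in the subspace topology on $K$. I would then repeat the graph-product retract argument for the right angled Artin group structure on $K$: each $e_m$ has finite support in the vertex set of $K$, the union of these supports is a countable vertex set $V'$, and the sub-RAAG on $V'$ is a countable retract of $K$ containing every $e_m$, again contradicting Proposition \ref{nosmallretract}(1).

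The main point requiring care is the existence of the retraction in a graph product, which uses that every defining relation is a commutator $[g_{v_0}, g_{v_1}]$ involving two vertex groups joined by an edge, so killing all generators outside $V'$ preserves every relation and the induced map is the identity on the sub-graph-product. The countability of $V'$, and hence of $G'$ or $K'$, requires only that a countable union of finite sets is countable, which is available from DC, so no stronger form of choice enters.
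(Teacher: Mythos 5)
Your proof is correct and follows essentially the same route as the paper's: parts (1) and (2) are handled by locating the $g_n$ (resp.\ $[l_n]$) in a countable sub-graph-product over the union of their finite supports and noting that this is a retract, contradicting Proposition \ref{nosmallretract}, and part (3) by giving $\prod_{m\in\omega}H_m$ the topology of coordinatewise convergence and reducing to the topological-group case. The only cosmetic differences are that you re-run the retract argument in (3) where the paper simply invokes part (1), and that you spell out why the retraction onto the sub-graph-product exists, which the paper takes for granted.
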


\begin{proof}  We prove the claim in (2) and the proof for (1) is completely analogous.  Assume the hypotheses for (2) and imagine for contradiction that $K$ is a graph product of countable groups, say $K \simeq  \Gamma(\{G_v\}_{v\in V})$ with each $G_v$ countable.  For each $n\in \omega$ we have a finite subset $V_n\subseteq V$ for which $[l_n]  \in \Gamma(\{G_v\}_{v\in V_n})$.  The subgroup $\Gamma(\{G_v\}_{v\in \bigcup_{n\in \omega}V_n})$ is a countable retract of $K$, contradicting Proposition \ref{nosmallretract}.

For (3) we endow $\prod_{m\in \omega}H_m$ with the topology of coordinate wise convergence (a sequence of elements in $\prod_{m\in \omega}H_m$ converges if and only if each coordinate eventually stabilizes).  Define $g_n \in \prod_{m\in \omega}H_m$ by
\begin{center}
$g_n(m) = \begin{cases} h_n$ if $m = n\\1_{H_{m}}$ otherwise $ \end{cases}$
\end{center}
\noindent  Supposing $K$ to be a right-angled Artin group including $\{(h_m^{\sigma(m)})_{m\in \omega}\}_{\sigma\in 2^{\omega}}$, we endow $K$ with the subspace group topology.  It is clear that $g_n \searrow 1_K$ in $K$ and $g_n \neq 1_K$ and we obtain a contradiction to (1).
\end{proof}

Next we consider the subgroup $\mathcal{B}\subseteq \mathbb{Z}^{\omega}$ whose elements are the bounded sequences.  That this group is free abelian follows from the axiom of choice (see \cite{No}).  Specker had an earlier proof that this group is free abelian, but assumed the continuum hypothesis \cite{Sp}. The group $\mathbb{Z}^{\omega}$ is itself not free abelian, and this can be proved from ZF + DC (see \cite{Sp} or \cite{F}).  The next result was first given by Blass \cite{Bl}:

\begin{theorem}\label{Blass}  $(*)$ The group $\mathcal{B}$ is not free abelian.
\end{theorem}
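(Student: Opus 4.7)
The plan is to apply Proposition \ref{nosmallretract}(1) to the subgroup $\mathcal{B}$, with its topology inherited from the product topology on $\mathbb{Z}^{\omega}$. Since $\mathbb{Z}^{\omega}$ is a Hausdorff topological group under coordinatewise addition, so is the subgroup $\mathcal{B}$.

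The crucial sequence is obtained from the standard basis vectors: let $g_n \in \mathcal{B}$ be the element with $1$ in coordinate $n$ and $0$ elsewhere. Then $g_n \neq 0$ for every $n$. I claim that $g_n \searrow 0$ in $\mathcal{B}$. Indeed, for any $\sigma \in 2^{\omega}$, the partial sums $\sigma(0)g_0 + \cdots + \sigma(n-1)g_{n-1}$ coincide in every coordinate $k < n$ with the bounded sequence $\sigma$ (regarded as an element of $\mathcal{B}$), so the partial sums converge coordinatewise to $\sigma \in \mathcal{B}$. Thus the hypothesis of Definition \ref{Hausdorffarrow} holds.

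Now suppose for contradiction that $\mathcal{B}$ is free abelian with some basis $\{b_\alpha\}_{\alpha \in I}$. Each $g_n$ is then a finite $\mathbb{Z}$-linear combination of basis elements; let $I_n \subseteq I$ be the (finite) set of indices appearing in this combination, and set $I' = \bigcup_{n\in\omega} I_n$, which is countable. Let $H$ be the subgroup generated by $\{b_\alpha\}_{\alpha \in I'}$; then $H$ is countable, contains every $g_n$, and is a direct summand of $\mathcal{B}$, so the projection $\mathcal{B} \to H$ killing the remaining basis vectors is a retraction. This contradicts Proposition \ref{nosmallretract}(1), and the theorem follows.

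There is no real obstacle here; the substantive work has been absorbed into Theorem \ref{thebigone} and Proposition \ref{nosmallretract}. The only point worth checking is that producing the countable retract really requires only a basis (not any choice principle beyond what we already have in ZF+DC given the basis), and this is immediate from the freeness of $\mathcal{B}$: once a basis is in hand, the projection onto the span of countably many basis elements is canonical.
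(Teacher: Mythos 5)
Your proof is correct and follows essentially the same route as the paper: the paper simply packages the argument by citing Corollary \ref{nographproduct}(3) (applied to the constant sequence $(1)_{m\in\omega}$), whose proof is exactly your combination of the coordinatewise convergence $g_n \searrow 0$ with limits $\sigma \in \mathcal{B}$, the countable-retract-from-a-basis observation, and Proposition \ref{nosmallretract}(1). You have merely inlined the graph-product corollary in the special case of a free abelian group, so there is no substantive difference.
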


\begin{proof}  Consider the element $(1)_{m\in \omega} \in \mathbb{Z}^{\omega}$ (here we are using additive notation) and apply Corollary \ref{nographproduct} (3).
\end{proof}

In this same spirit, we give a result expressing the inability to select a Hamel basis for certain vector spaces over $\mathbb{Q}$.

\begin{theorem}\label{noHamel}  $(*)$  Let $f \in \mathbb{Q}^{\omega}$ be a positive function.  The following groups cannot be expressed as a direct sum $\bigoplus_{i\in I}\mathbb{Q}$:

\begin{enumerate}

\item $\mathbb{Q}^{\omega}$

\item The subgroup in $\mathbb{Q}^{\omega}$ of all sequences $(h_m)_{m\in \omega}$ for which $|h_m|$ is $O(f)$

\item The subgroup in $\mathbb{Q}^{\omega}$ of all sequences $(h_m)_{m\in \omega}$ for which $|h_m|$ is $o(f)$

\end{enumerate}
\end{theorem}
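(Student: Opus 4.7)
The plan is to reduce each of the three assertions to Corollary \ref{nographproduct}(1). Any direct sum $\bigoplus_{i\in I}\mathbb{Q}$ is the graph product of the countable groups $\mathbb{Q}$ over the complete graph on $I$, and so is in particular a graph product of countable groups (as noted just after the definition of graph product). Hence to prove each part it suffices to equip the group $G$ in question with a Hausdorff topological group structure and exhibit a sequence $g_n \in G$ with $g_n \neq 0$ such that $g_n \searrow 0$.

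In every case I equip $G$ with the subspace topology inherited from the product topology on $\mathbb{Q}^{\omega}$ (with $\mathbb{Q}$ discrete), which is a Hausdorff group topology. I let $g_n \in G$ be the sequence supported at the single coordinate $n$, taking value $c_n$ there, for a suitably chosen positive rational $c_n$. For any $\sigma \in 2^{\omega}$ the partial sums $g_0^{\sigma(0)} + \cdots + g_{n-1}^{\sigma(n-1)}$ stabilize at each coordinate, and so converge coordinate-wise to the sequence $g^{\sigma}$ with value $\sigma(n)c_n$ at coordinate $n$. Thus $g_n \searrow 0$ in $G$, provided $g^{\sigma} \in G$ for every $\sigma$.

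It remains to choose the scalars $c_n$ so that every $g_n$ and every $g^{\sigma}$ lies in $G$: in case (1) take $c_n = 1$; in case (2) take $c_n = f(n)$, so that $|g^{\sigma}(n)| \le f(n)$ and $g^{\sigma}$ is $O(f)$; in case (3) take $c_n = f(n)/2^n$, so that $|g^{\sigma}(n)|/f(n) \le 2^{-n} \to 0$ and $g^{\sigma}$ is $o(f)$. Each $g_n$ is visibly nontrivial and already lies in the relevant subgroup, so Corollary \ref{nographproduct}(1) applies.

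No serious obstacle arises: the entire combinatorial content has already been packaged into Corollary \ref{nographproduct}(1). What remains is the observation that a direct sum of copies of $\mathbb{Q}$ is a graph product of countable groups, together with the elementary growth estimates above that keep $g_n$ and every $g^{\sigma}$ inside the prescribed subgroup of $\mathbb{Q}^{\omega}$.
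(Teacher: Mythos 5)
Your proof is correct and follows essentially the same route as the paper: the paper also reduces all three cases to Corollary \ref{nographproduct}(1) by topologizing the subgroup with coordinate-wise convergence and taking $g_n$ supported at the single coordinate $n$ with a value small enough (there, $f(n)/n$) that every $g^{\sigma}$ stays in the subgroup. Your scalars $c_n$ differ only cosmetically, and your case-by-case verification that $g^{\sigma}$ lands in the prescribed subgroup is exactly the point the paper leaves implicit.
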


\begin{proof}  We prove (3) and the same proof applies to  (1) and (2).  Let the group in question be denoted $G$.  For each $n\in \omega$ let $g_n = \begin{cases} f(m)/m$ if $m = n\\ 0$ otherwise $\end{cases}$.  Endowing $G \leq \mathbb{Q}^{\omega}$ with the topology of coordinate wise convergence with each coordinate having the discrete topology, we see that $g_n\searrow 0_G$ and $g_n \neq 0_G$ for all $n\in \omega$.  Apply Corollary \ref{nographproduct} (1).
\end{proof}

We give some background on the Hawaiian earring group so as to state and prove a non-abelian analogue of Theorem \ref{Blass}. The interested reader may consult \cite{CC} or \cite{E} for a more in depth exposition of the Hawaiian earring group.  Let $\{a_n^{\pm 1}\}_{n\in \omega}$ be a countably infinite set with formal inverses.  A function $W: \overline{W}\rightarrow \{a_n^{\pm 1}\}_{n\in \omega}$ from a countable totally ordered set $\overline{W}$ is a \emph{word} if for each $n\in \omega$ the set $W^{-1}(\{a_n^{\pm 1}\})$ is finite.  For two words $U$ and $V$ we write $U \equiv V$ if there exists an order isomorphism of the domains of each word $f: \overline{U} \rightarrow \overline{V}$ such that $U(i) = V(f(i))$.  We use $\W$ to denote a selection from each $\equiv$ equivalence class.  For each $N\in \omega$ there is a projection map $p_N$ to the set of finite words given by letting $p_N(W) = W\upharpoonright \{i\in \overline{W}: W(i) \in \{a_n^{\pm 1}\}_{n=0}^{N}\}$.  For words $U, V\in \W$ we let $U \sim V$ if for each $N\in \omega$ we have $p_N(U) = p_N(V)$ in the free group $F(\{a_0, \ldots, a_N\})$.  This is an equivalence relation.  For each word $U$ there is an inverse word $U^{-1}$ whose domain is the totally ordered set $\overline{U}$ under the reverse order and $U^{-1}(i) = (U(i))^{-1}$.  Given two words $U, V\in \W$ there is a natural way to form the concatenation $UV$.  One takes the domain of $UV$ to be the disjoint union of $\overline{U}$ with $\overline{V}$, with order extending that of $\overline{U}$ and $\overline{V}$ and placing all elements of $\overline{U}$ before those of $\overline{V}$, and $UV(i) = \begin{cases}U(i)$ if $i\in \overline{U}\\V(i)$ if $i\in \overline{V}  \end{cases}$.  The set $\W/\sim$ now has a group structure with binary operation $[U][V] = [UV]$, inverses defined by $[U]^{-1} = [U^{-1}]$ and the trivial element given by the equivalence class of the empty word $E$.

Let $\HEG$ denote the group $\W/\sim$.  The free group $F(\{a_0, \ldots, a_N\})$, which we shall denote $\HEG_N$, may be though of as a subgroup in $\HEG$ in the obvious way.  Moreover, the word map $p_N$ defines a group retraction $\HEG \rightarrow \HEG_N$ which we denote $p_N$ by abuse of notation.  A word $W\in \W$ is \emph{reduced} if $W\equiv UVX$ with $V \sim E$ implies $V\equiv E$.  Each class $[U]\in\HEG$ has a unique reduced representative $W$ \cite[Theorem 1.4]{E}.

We highlight two subgroups of $\HEG$.  The first subgroup, denoted $\Bd$, consists of those elements $[W] \in \HEG$ for whose reduced word representative $W$ there exists a number $N\in \omega$ such that for every $n\in \omega$ we have $|\{i\in \overline{W}\mid W(i) \in \{a_n^{\pm 1}\}|<N$.  This group was introduced by Zastrow in \cite{Z} as an analogue to the abelian group $\mathcal{B}$ and shown to be a free group using the axiom of choice.  The second subgroup of $\HEG$, which we denote $\Scat$, consists of those elements of $\HEG$ whose reduced word representative has domain of scattered order type (i.e. contains no densely ordered subset with more than one element).  This subgroup was studied in \cite{CC} and shown therein to be a free group under ZFC.

\begin{theorem}$(*)$  Neither $\Bd$ nor $\Scat$ is free.
\end{theorem}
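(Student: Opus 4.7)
The plan is to realize $\Bd$ and $\Scat$ as subgroups of $\HEG = \pi_1(X,x)$, where $X$ is the Hawaiian earring with wedge point $x$, and then invoke Corollary \ref{nographproduct}(2). For each $n\in\omega$ I would let $l_n\colon[0,1]\to X$ be the standard loop traversing the $n$-th circle exactly once, so that $[l_n]$ corresponds to $[a_n]\in\HEG$. Since the $n$-th circle has diameter tending to $0$, any neighborhood of $x$ eventually contains the image of $l_n$, whence $l_n\searrow x$; and each $[l_n]$ is nontrivial because $p_n([l_n]) = a_n\neq 1$ in the free group $\HEG_n$.

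The main step is to verify that $(l_n)_{n\in\omega}$ is deeply concatenable for both $K=\Bd$ and $K=\Scat$. Fix $\sigma\in 2^{\omega}$. Unwinding Definition \ref{Looparrow}, the concatenated loop $l^{\sigma}$ represents the element of $\HEG$ whose natural word $W_{\sigma}$ is indexed by $\{n\in\omega:\sigma(n)=1\}$ with the order inherited from $\omega$, taking the value $a_n$ at position $n$. No two letters of $W_{\sigma}$ share an index, so no cancellation between distinct letters is possible and $W_{\sigma}$ is already a reduced representative of $[l^{\sigma}]$ (in the sense of \cite[Theorem 1.4]{E}). Its domain is a subset of $\omega$ and hence scattered, witnessing $[l^{\sigma}]\in\Scat$; and every letter $a_n^{\pm 1}$ occurs at most once in $W_{\sigma}$, so the bound $N=2$ witnesses $[l^{\sigma}]\in\Bd$. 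Therefore $(l_n)_{n\in\omega}$ is deeply concatenable for each of these subgroups.

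Finally, any free group is the free product, and hence a graph product on the edgeless graph, of copies of $\mathbb{Z}$; since $\mathbb{Z}$ is countable, freeness of $\Bd$ (respectively $\Scat$) would express it as a graph product of countable groups. Corollary \ref{nographproduct}(2), applied with $K=\Bd$ or $K=\Scat$ and the sequence $(l_n)_{n\in\omega}$ above, then furnishes the desired contradiction. The only point requiring care is the identification of the reduced representative of $[l^{\sigma}]$ with $W_{\sigma}$; once that is in place the two membership checks are immediate from the definitions of $\Bd$ and $\Scat$, so I do not anticipate a serious obstacle.
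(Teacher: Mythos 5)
Your proposal is correct and rests on the same core idea as the paper's proof---namely that for every $\sigma\in 2^{\omega}$ the infinite word $a_0^{\sigma(0)}a_1^{\sigma(1)}\cdots$ is a legitimate element of $\HEG$ lying in both $\Bd$ and $\Scat$, after which Corollary \ref{nographproduct} finishes the job---but you route the argument through clause (2) of that corollary while the paper uses clause (1). The paper never leaves the combinatorial definition of $\HEG$: it embeds $\HEG$ into the inverse limit $\overline{F}=\varprojlim \HEG_N$ via $[W]\mapsto(\Red(p_N(W)))_{N\in\omega}$, topologizes $\overline{F}$ by coordinatewise convergence, and checks $g_n\searrow 1_{\overline{F}}$ for $g_n=a_n$. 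You instead realize $\HEG$ as $\pi_1$ of the Hawaiian earring and verify that the loops $l_n$ around the shrinking circles are deeply concatenable for $\Bd$ and $\Scat$. Both verifications work, but yours imports the external (standard, and provable in ZF+DC) identification $\pi_1(X,x)\cong\HEG$ under which $[l^{\sigma}]$ corresponds to the word $W_{\sigma}$---exactly the point you flag as needing care---whereas the paper's inverse-limit formulation makes the corresponding step immediate from the definition of $\sim$. Your remaining checks are sound: since each letter occurs at most once in $W_{\sigma}$, any subword equivalent to the empty word is empty, so $W_{\sigma}$ is reduced, its domain is a subset of $\omega$ and hence scattered (giving $\Scat$), and the occurrence bound $N=2$ gives $\Bd$; and a free group is a graph product of copies of $\mathbb{Z}$ over an edgeless graph, so Corollary \ref{nographproduct}(2) yields the contradiction.
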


\begin{proof}  Let $\Red(W)$ denote the finite reduced word representative of an element $[W]\in \HEG_N$.  By how the relation $\sim$ was defined, each element $[W] \in \HEG$ is uniquely determined by the sequence $(\Red(p_N(W)))_{N\in \omega}$.  Letting $\overline{F}$ be the inverse limit $\varprojlim \HEG_N$ given by homomorphisms $p_{N+1}: \HEG_{N+1} \rightarrow \HEG_N$, we immediately see that $\HEG$ is a subgroup of $\overline{F}$ via the monomorphism $[W] \mapsto (\Red(p_N(W)))_{N\in \omega}$.  Topologize $\overline{F}$ by coordinate wise convergence.

For each $n\in\omega$ let $g_n \in \HEG$ be the word $a_n$, so $g_n$ is represented in $\overline{F}$ by the sequence $(U_{N, n})_{N\in \omega}$ where $U_{N, n} \equiv \begin{cases}a_n$ if $n\geq N\\ E$ otherwise $\end{cases}$.   It is easy to see that $g_n \neq 1_{\overline{F}}$ for all $n$ and that $g_n \searrow 1_{\overline{F}}$; for each $\sigma\in 2^{\omega}$ the element $g^{\sigma}$ is represented by the word of either $\omega$ or finite order type given by $a_0^{\sigma(0)}a_1^{\sigma(1)}\cdots$.  In particular each such word $g^{\sigma}$ is an element of $\Scat$ and of $\Bd$.  We finish by applying Corollary \ref{nographproduct} (1).
\end{proof}

Recall that an abelian group $A$ is \emph{slender} if for every homomorphism $\phi: \mathbb{Z}^{\omega} \rightarrow A$ there exists $N\in \omega$ such that $\phi = \phi \circ \proj_N$ where $\proj_N: \mathbb{Z}^{\omega} \rightarrow \prod_{n=0}^{N-1}\mathbb{Z} \times (0)_{n=N}^{\infty}$ is the retraction which projects to the first $N$ coordinates.  It is a classically known consequence of ZFC that a group is slender if and only if it is torsion-free and does not contain an isomorph of $\mathbb{Q}$, $\mathbb{Z}^{\omega}$ or the $p$-adic integers $J_p$ for some prime $p$ (see \cite{Nu}).

More generally a group $H$ is \emph{noncommutatively slender} (or \emph{n-slender}) provided for every homomorphism $\phi: \HEG \rightarrow H$ there exists $N\in \omega$ such that $\phi = \phi\circ p_N$, where $p_N: \HEG \rightarrow \HEG_N$ is the retraction described earlier.  Free groups \cite{H}, Baumslag-Solitar groups \cite{Na} and torsion-free word hyperbolic groups \cite{Co} are are examples of n-slender groups.  Assuming the axiom of choice, n-slender groups can not include torsion or a subgroup isomorphic to $\mathbb{Q}$ \cite{E}.

\begin{theorem}\label{slender} $(*)$  If $A$ fails to be slender then $|2^{\omega}| \leq |A|$.  Also, if $H$ fails to be n-slender then $|2^{\omega}| \leq |H|$.

\end{theorem}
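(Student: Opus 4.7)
My plan is to reduce each part to a tool from Section \ref{TheoremA}.

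For the slender claim, take $\phi: \mathbb{Z}^{\omega}\to A$ witnessing the failure of slenderness, so that $\phi\neq \phi\circ \proj_N$ for every $N$. Give $\mathbb{Z}^{\omega}$ the product topology (with $\mathbb{Z}$ discrete), under which it is Polish and hence a completely metrizable abelian topological group. An open subgroup of $\mathbb{Z}^{\omega}$ must contain a basic neighborhood $\ker(\proj_N) = \{x: x(k) = 0 \text{ for all } k<N\}$ of $0$; thus $\phi$ factors through some $\proj_N$ if and only if $\ker(\phi)$ is open. Since our $\phi$ does not, Theorem \ref{cmgroupsopenkernel} yields $|2^{\omega}| \leq |A|$.

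For the n-slender claim, take $\phi: \HEG\to H$ witnessing the failure of n-slenderness, and realize $\HEG = \pi_1(\mathcal{H},x_0)$ for the Hawaiian earring $\mathcal{H}$ at its wedge-point $x_0$. The space $\mathcal{H}$ is first countable at $x_0$, and every neighborhood $U$ of $x_0$ contains a tail $\mathcal{H}^{>N}:=\bigcup_{k>N}C_k$ for some $N\in\omega$. I will invoke Corollary \ref{fundamentalgroup}, whose hypothesis requires that every neighborhood $U$ of $x_0$ contain a loop $l$ with $[l]\notin \ker(\phi)$. The image of $\pi_1(\mathcal{H}^{>N},x_0)\to\HEG$ is the subgroup $\HEG^{>N}$ of classes whose reduced representative uses only letters $a_k^{\pm 1}$ with $k>N$, so it suffices to show $\phi|_{\HEG^{>N}}\not\equiv 1$ for every $N$.

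I expect the main obstacle is the contrapositive of this last statement: if $\phi|_{\HEG^{>N}}\equiv 1$, then $\phi=\phi\circ p_N$, contradicting the choice of $\phi$. For this one needs a normal form for reduced words. By the word condition, each reduced $W\in \W$ contains only finitely many occurrences of letters from $\{a_0^{\pm 1},\ldots,a_N^{\pm 1}\}$; these appear at positions $i_1<\cdots<i_m$ in $\overline{W}$ spelling out a word $F_1 F_2\cdots F_m$, and the restrictions $T_0,T_1,\ldots,T_m$ of $W$ to the complementary sub-intervals of $\overline{W}$ are reduced sub-words supported only on letters $a_k^{\pm 1}$ with $k>N$. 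Then $[T_j]\in \HEG^{>N}$ and $[W]=[T_0][F_1][T_1]\cdots[F_m][T_m]$ in $\HEG$, while $p_N([W])=[F_1 F_2\cdots F_m]$. Applying $\phi$ and using the hypothesis $\phi([T_j])=1$ collapses the product to $\phi(p_N([W]))$, so $\phi=\phi\circ p_N$. With this in hand, Corollary \ref{fundamentalgroup} yields $|2^{\omega}|\leq |H|$.
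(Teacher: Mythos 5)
Your proof is correct. For the slender half you do exactly what the paper does: reduce to Theorem \ref{cmgroupsopenkernel} via the observation that $\phi$ factors through some $\proj_N$ if and only if $\ker(\phi)$ is open (you spell out that equivalence where the paper leaves it implicit). For the n-slender half you take a genuinely different route. The paper stays combinatorial: it topologizes $\HEG$ as a subgroup of the inverse limit $\overline{F}=\varprojlim \HEG_N$, extracts from $\phi\neq\phi\circ p_{n-1}$ an element $g_n\in\ker(p_{n-1})\setminus\ker(\phi)$, and applies Theorem \ref{thebigone}(1) to $g_n\searrow 1$. You instead realize $\HEG$ as $\pi_1(\mathcal{H},x_0)$ and route through Corollary \ref{fundamentalgroup}, i.e.\ Theorem \ref{thebigone}(2), which obliges you to prove the sharper statement that $\phi$ is nontrivial on the subgroup $\HEG^{>N}$ of classes supported on the letters $a_k^{\pm 1}$ with $k>N$ --- a strictly smaller subgroup than $\ker(p_N)$, which is all the paper needs. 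Your normal-form decomposition $[W]=[T_0][F_1][T_1]\cdots[F_m][T_m]$ with $p_N([W])=[F_1\cdots F_m]$ is the right tool and is carried out correctly (finiteness of the positions $i_1<\cdots<i_m$ comes from the word condition, and subwords of reduced words are reduced). The extra lemma buys something real: witnesses lying in $\HEG^{>N}$ have reduced representatives omitting all low-index letters, so the infinite concatenations $l^{\sigma}$ are manifestly legitimate loops in shrinking neighborhoods and the ``deeply concatenable'' hypothesis is transparent; with the paper's weaker choice $g_n\in\ker(p_{n-1})$, the reduced representative of $g_n$ may still involve low-index letters, and one must argue separately that the limits $g^{\sigma}$, which certainly exist in $\overline{F}$, actually lie in $\HEG$ where $\phi$ is defined. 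Your detour is longer but is self-justifying at precisely the point where the paper's argument is terse.
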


\begin{proof}  For the first claim, we notice that the group $\mathbb{Z}^{\omega}$ is a Polish group under coordinate wise convergence.  Supposing $A$ is not slender we get a homomorphism $\phi:\mathbb{Z}^{\omega} \rightarrow A$ which does not have open kernel.  By Theorem \ref{cmgroupsopenkernel} we see $|2^{\omega}| \leq |A|$.

For the second claim, suppose $H$ is not n-slender and let $\phi: \HEG \rightarrow H$ witness.  Topologize $\HEG$ under coordinate wise convergence.  For each $n\in \omega$ we get an element $(W_{N, n})_{N\in \omega}\in \HEG\leq \overline{F}$ such that $W_{N, n}= E$ for $N<n$ and $(W_{N, n})_{N\in \omega}\notin \ker(\phi)$.  Now $(W_{N, n})_{N\in \omega}\searrow E$ and by Theorem \ref{thebigone} (1) we have $|2^{\omega}| \leq |H|$.
\end{proof}

One can similarly say a group $H$ is \emph{completely metrizable slender} (or \emph{cm-slender}) if any homomorphism from a completely metrizable group to $H$ has open kernel \cite{CoCo}.  Theorem \ref{cmgroupsopenkernel} states precisely that if $H$ is not cm-slender then $|2^{\omega}| \leq |H|$.
\end{section}

\begin{section}{Deep boundedness for length functions}\label{Length}

We restate and prove Theorem \ref{lengthfunctiontheorem}, first recalling the following definition:

\begin{definition} \label{length}  A \emph{length function} on a group $G$ is a function $L: G\rightarrow [0, \infty)$ such that

\begin{enumerate}\item  $L(1_G) = 0$

\item  $L(g) = L(g^{-1})$

\item  $L(gh) \leq L(g) + L(h)$
\end{enumerate}
\end{definition}

\begin{B} $(*)$ The following hold:
\begin{enumerate}  
\item  Suppose $G$ is a Hausdorff topological group, $g_n\searrow 1_G$ and $L$ is a length function on $G$.  The sequence $(L(g_n))_{n\in \omega}$ is bounded.

 \item  Suppose $(l_n)_{n\in \omega}$ is deeply concatenable for $K\leq \pi_1(X, x)$ and $L$ is a length function on $K$.  The sequence $(L([l_n]))_{n\in \omega}$ is bounded.
\end{enumerate}
\end{B}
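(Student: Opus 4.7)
The plan is to prove part (1) directly and observe that part (2) follows by essentially the same argument. Following the spirit of the proof of Theorem \ref{thebigone}, I will stratify $2^{\omega}$ according to the value of $L(g^\sigma)$ and use the Baire property granted by $(*)$ to find a comeager "level set," then exploit a coordinate-flip involution restricted to a zero prefix in order to dispose of the conjugation that arises when comparing two infinite products. Concretely, for each $M \in \omega$ set $A_M = \{\sigma \in 2^\omega : L(g^\sigma) \leq M\}$. Since $L$ is finite-valued, these cover $2^\omega$, and under $(*)$ each has the Baire property, so by Baire category some $A_M$ is non-meager and hence comeager in some basic open set $U(\alpha)$. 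Writing $k = |\alpha|$ and $P = g_0^{\alpha(0)} \cdots g_{k-1}^{\alpha(k-1)}$, the homeomorphism $\tau \mapsto \alpha\tau$ of $2^\omega$ onto $U(\alpha)$ pulls $A_M \cap U(\alpha)$ back to a comeager subset of $2^\omega$. For $\tau$ in that preimage, if $h^\tau = \lim_m g_k^{\tau(0)} \cdots g_{k+m-1}^{\tau(m-1)}$ (which exists because the shifted sequence inherits $\searrow 1_G$), then $g^{\alpha\tau} = P h^\tau$, and subadditivity yields $L(h^\tau) \leq L(P) + M =: M'$. Thus $B_{M'} = \{\tau : L(h^\tau) \leq M'\}$ is comeager in $2^\omega$.

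Next, for each $n$ I would locate $\tau \in B_{M'} \cap U(0^n)$ such that the coordinate-$n$-flip $f_n(\tau)$ also lies in $B_{M'}$. Because $f_n$ fixes $U(0^n)$ setwise and preserves meagerness, $B_{M'} \cap U(0^n) \cap f_n(B_{M'})$ is comeager in $U(0^n)$, hence nonempty. For such $\tau$, the vanishing of the first $n$ factors gives $h^\tau = g_{k+n}^{\tau(n)} T$ and $h^{f_n(\tau)} = g_{k+n}^{1-\tau(n)} T$ for a common tail $T$, so $h^\tau (h^{f_n(\tau)})^{-1} = g_{k+n}^{\pm 1}$ on the nose, \emph{without} any conjugating element. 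Subadditivity then yields $L(g_{k+n}) \leq 2M'$, uniformly in $n$, and combining this with the finitely many initial values $L(g_0), \ldots, L(g_{k-1})$ bounds the entire sequence.

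The principal obstacle I expect is precisely that length functions are not in general conjugation-invariant: a naive flip argument on $g^\sigma$ and $g^{f_n(\sigma)}$ shows only that a \emph{conjugate} of $g_n^{\pm 1}$ has small length, which does not suffice to bound $L(g_n)$. The zero-prefix trick is designed to trivialize the conjugating word. Part (2) is handled by the same recipe: the shifted sequence $(l_{k+n})_{n\in\omega}$ is deeply concatenable for $K$ (witnessed by sequences of the form $0^k \tau$), the prefix $[l_0^{\alpha(0)} * \cdots * l_{k-1}^{\alpha(k-1)}] = [l^{\alpha \cdot 0^\omega}]$ lies in $K$ so that $L$ evaluates on it, and the same zero-prefix involution argument in $K$ produces $[l_{k+n}^{\pm 1}]$ with no conjugation and hence the analogous bound.
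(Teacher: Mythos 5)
Your proof is correct, and although it rests on the same core mechanism as the paper's --- Baire category applied to the level sets of $\sigma \mapsto L(g^{\sigma})$ together with meagerness-preserving coordinate flips --- the execution is genuinely different and in fact delivers a sharper, more direct conclusion. The paper argues by contradiction: it first passes to a subsequence normalized so that $L([l_n]) > n + \sum_{m<n}L([l_m])$, finds a single $\sigma$ lying in $U \cap \bigcap_{F\in\mathcal{F}} f_F(J_n)$ for \emph{all} finite flip sets $F \subseteq [k,\infty)$ simultaneously, and then isolates $[l_N]$ as $[l_0^{\sigma(0)}*\cdots*l_{N-1}^{\sigma(N-1)}]^{-1}[l^{\sigma}][l^{\tau}]^{-1}$; since the entire prefix up to $N$ must be paid for, the resulting estimate $L([l_N]) \le \sum_{m<N}L([l_m]) + 2n + \sum_{m<k}L([l_m])$ is contradictory only because of the fast-growth normalization. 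You instead normalize the prefix away once and for all (via the shift homeomorphism $\tau \mapsto \alpha\tau$ and left multiplication by $P^{-1}$, both legitimate since $G$ is a topological group and limits are unique), and then for each $n$ separately locate a point of $U(0^n) \cap B_{M'} \cap f_n(B_{M'})$; the zero prefix places $g_{k+n}^{\pm 1}$ at the extreme left of both $h^{\tau}$ and $h^{f_n(\tau)}$, so it is extracted exactly, with no conjugating word, giving the uniform bound $L(g_{k+n}) \le 2M'$ with no subsequence and no contradiction. Your explicit identification of the conjugation obstruction (a naive flip of $\sigma$ only bounds the length of a conjugate of $g_n^{\pm 1}$) is precisely the difficulty the paper's telescoping bookkeeping is also designed to circumvent, and your checks for part (2) --- that the shifted sequence stays deeply concatenable, that the prefix class $[l^{\alpha 0^{\omega}}]$ lies in $K$ so that $L$ applies to it, and that the common-tail cancellation goes through in $\pi_1(X,x)$ --- are exactly the right ones. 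Both arguments are valid; yours buys a clean absolute bound $2M'$ on the tail of the sequence, while the paper's keeps every flip acting on the original copy of $2^{\omega}$ at the cost of the subsequence-and-partial-sums setup.
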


\begin{proof}  We prove part (2) and the proof of part (1) is entirely analogous.  Suppose the conclusion fails.  By selecting a subsequence if necessary, we may assume that $L([l_0]) > 1$ and that $L([l_n]) > n + \sum_{m=0}^{n-1}L([l_m])$.  Letting $J_n = \{\sigma\in 2^{\omega}\mid L([l^{\sigma}]) \leq n\}$, we have $\bigcup_{n\in \omega}J_n = 2^{\omega}$.  As the $J_n$ all have the Baire property and $\bigcup_{n\in \omega}J_n = 2^{\omega}$, there exists some $J_n$ which is not meager.  Select a basic open set $U = U(s_0, \ldots, s_{k-1})$ such that $U\setminus E_n$ is meager in $2^{\omega}$.  Let $\mathcal{F}$ denote the set of finite subsets of $[k, \infty) \cap \omega$.  For each $F \in \mathcal{F}$ we let $f_F: 2^{\omega} \rightarrow 2^{\omega}$ be the homeomorphism that switches precisely the coordinates in $F$.  Certainly $f_F(U) = U$ for all $F \in \mathcal{F}$.  We have $U \setminus f_F(E_n)$ meager for each $F \in \mathcal{F}$.  Then $U \setminus (\bigcap_{F \in \mathcal{F}}f_F(E_n))$ is meager in $2^{\omega}$ as a countable union of meager sets.  As $U$ is not meager, we see that $U \cap\bigcap_{F\in \mathcal{F}}f_F(E_n)$ is nonempty and let $\sigma$ be an element thereof.

Select $N>2n  +  \sum_{m=0}^{k-1}L([l_m])$.  As $f_{\{N\}}(\sigma) \in J_n$ we may assume without loss of generality that $\sigma(N) = 1$.  Let $\tau\in 2^{\omega}$ be the sequence $\tau(m) =\begin{cases} 0$ if $m<N+1\\ \sigma(m)$ if $N+1 \leq m \end{cases}$.  Notice that $[l^{\sigma}] = [l_0^{\sigma(0)}*\cdots* l_{N-1}^{\sigma(N-1)}][l_N][l^{\tau}]$.  Let $F = \{k\leq q<N\mid\sigma(q) = 1\}$.  We also have

\begin{center}
$[l^{\tau}] = [l_0^{\sigma(0)}*\cdots *l_{k-1}^{\sigma(k-1)}]^{-1}[l^{f_F(\sigma)}]$
\end{center}

\noindent and therefore 

\begin{center}
$L([l^{\tau}]) \leq L([l_0^{\sigma(0)}*\cdots *l_{k-1}^{\sigma(k-1)}]) + n$

$\leq \sum_{m=0}^{k-1}L([l_m]) +n$
\end{center}

\noindent Thus it follows that

\begin{center} $N +  \sum_{m=0}^{N-1}L([l_m])< L([l_N])$

$\leq  L([l_0^{\sigma(0)}*\cdots* l_{N-1}^{\sigma(N-1)}]) + L([l^{\sigma}])+ L([l^{\tau}])$

$\leq \sum_{m = 0}^{N-1}L([l_m]) + n +  \sum_{m=0}^{k-1}L([l_m]) +n$
\end{center}

\noindent whence $N < 2n +   \sum_{m=0}^{k-1}L([l_m])$, a contradiction.

\end{proof}

We point out that the sequence $(L(g_n))_{n\in\omega}$ in Theorem \ref{lengthfunctiontheorem} needn't go to zero, even assuming $(*)$.  To see this simply consider the length function on $\mathbb{R}$ which assigns length $1$ to every non-identity element.

\begin{theorem}\label{boundforcm}$(*)$  If $G$ is a completely metrizable topological group and $L: G \rightarrow [0, \infty)$ a length function, there exists an open neighborhood $U\subseteq G$ of $1_G$ and $N\in \omega$ such that $g\in U$ implies $L(g) \leq N$.
\end{theorem}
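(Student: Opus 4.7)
The plan is to argue by contradiction in a fashion that closely mirrors the proof of Theorem \ref{cmgroupsopenkernel}: we manufacture a sequence $g_n \searrow 1_G$ with $L(g_n)$ unbounded and then invoke Theorem \ref{lengthfunctiontheorem} part (1). So suppose no such pair $(U,N)$ exists. Unfolding the quantifiers, this means that for every open neighborhood $V$ of $1_G$ and every $N \in \omega$ there exists $g \in V$ with $L(g) > N$.

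Next, fix a complete metric $d$ compatible with the topology of $G$. I would build inductively a sequence $(g_n)_{n \in \omega}$ in $G$ together with open neighborhoods $(U_n)_{n \in \omega}$ of $1_G$ so that, at stage $n$, the neighborhood $U_n$ is small enough that $g \in U_n$ implies
\begin{center}
$d\bigl(g_0^{s_0} \cdots g_{n-1}^{s_{n-1}} g,\ g_0^{s_0} \cdots g_{n-1}^{s_{n-1}}\bigr) \leq 2^{-n}$
\end{center}
for every tuple $\langle s_0, \ldots, s_{n-1} \rangle \in \{0,1\}^n$ (there are only finitely many such tuples, so this is a finite intersection of neighborhoods); and simultaneously $g_n \in U_n$ is chosen with $L(g_n) > n$, which is possible by the contradiction hypothesis applied to the neighborhood $U_n$.

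For any $\sigma \in 2^{\omega}$, the choice of $U_n$ then guarantees that the partial products $(g_0^{\sigma(0)} \cdots g_{n-1}^{\sigma(n-1)})_{n \in \omega}$ form a $d$-Cauchy sequence, hence converge in $G$ by completeness. Therefore $g_n \searrow 1_G$ in the sense of Definition \ref{Hausdorffarrow}. Applying Theorem \ref{lengthfunctiontheorem} part (1) to this sequence yields that $(L(g_n))_{n \in \omega}$ is bounded, contradicting $L(g_n) > n$.

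The construction is routine given the blueprint of Theorem \ref{cmgroupsopenkernel}, and I do not anticipate a real obstacle: the only substantive point is to observe that the contradiction hypothesis furnishes elements of arbitrarily large $L$-length in \emph{every} neighborhood of $1_G$, which is exactly what is needed to keep the inductive selection alive while shrinking the neighborhoods fast enough for Cauchyness.
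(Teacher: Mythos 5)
Your proposal is correct and follows essentially the same route as the paper: both argue by contradiction, inductively shrink neighborhoods $U_n$ so that all $2^n$ partial products $g_0^{s_0}\cdots g_{n-1}^{s_{n-1}}$ are displaced by at most $2^{-n}$, pick $g_n\in U_n$ of length greater than $n$ using the negated conclusion, deduce $g_n\searrow 1_G$ from Cauchyness and completeness, and contradict Theorem \ref{lengthfunctiontheorem} part (1).
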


\begin{proof}  The proof will be similar to that of Theorem \ref{cmgroupsopenkernel}.  Let $d$ be a complete metric compatible with the topology on $G$.  Suppose the conclusion is false.  Select $g_0\in G$ such that $L(g_0)>1$.  Pick a neighborhood $U_1$ of $1_G$ for which $g\in U_1$ implies both

\begin{center}
$d(g_0g, g_0) \leq\frac{1}{2}$

$d(g, 1_G)\leq \frac{1}{2}$

\end{center}

\noindent  Pick $g_1 \in U_1$ such that $L(g_1) > 2$.  In general, assuming we have selected $g_0, \ldots, g_{n-1}$ and $U_1, \cdots, U_{n-1}$ in this way, we select open neighborhood $U_n$ of $1_G$ such that $g\in U_n$ implies that for all $\langle s_0, s_1, \ldots, s_{n-1}\rangle$ with $s_i \in \{0, 1\}$ we get

\begin{center}
$d(g_0^{s_0}g_1^{s_1}\cdots g_{n-1}^{s_{n-1}}g, g_0^{s_0}g_1^{s_1}\cdots g_{n-1}^{s_{n-1}})\leq 2^{-n}$
\end{center}

\noindent and select $g_n\in U_n$ such that $L(g_n) > n+1$.  For every $\sigma\in 2^{\omega}$ the sequence $(g_0^{\sigma(0)}\cdots g_n^{\sigma(n)})_{n\in \omega}$ is Cauchy, and therefore convergent.  Then $g_n\searrow 1_G$.  We finish by applying Theorem \ref{lengthfunctiontheorem} part (1) for a contradiction.
\end{proof}

Theorem \ref{boundforcm} allows us to show that no infinite product over $\mathbb{Q}$ has a Hamel basis when $(*)$ is assumed, improving Theorem \ref{noHamel} part (1):

\begin{corollary}$(*)$ If $X$ is infinite then $\mathbb{Q}^X$ is not isomorphic to a direct sum $\bigoplus_{i\in I}\mathbb{Q}$.
\end{corollary}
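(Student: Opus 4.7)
The plan is to pull back the natural $\ell^{1}$ length function on $\bigoplus_{i\in I}\mathbb{Q}$ along any hypothetical isomorphism and derive a contradiction from Theorem \ref{boundforcm} applied to a completely metrizable copy of $\mathbb{Q}^{\omega}$ sitting inside $\mathbb{Q}^{X}$. Suppose for contradiction that $\phi\colon \mathbb{Q}^{X}\to\bigoplus_{i\in I}\mathbb{Q}$ is a group isomorphism; such a $\phi$ is automatically $\mathbb{Q}$-linear since $\mathbb{Q}$ is uniquely divisible and torsion-free. Writing each $y\in\bigoplus_{i\in I}\mathbb{Q}$ in its unique finite expansion $y=\sum_{i}c_{i}f_{i}$ with respect to the standard basis $\{f_{i}\}_{i\in I}$, I define
\[
L(v) \;=\; \sum_{i\in I}|c_{i}| \qquad\text{where}\qquad \phi(v) = \sum_{i\in I}c_{i}f_{i}.
\]
Then $L$ is a length function on $\mathbb{Q}^{X}$, and $\mathbb{Q}$-linearity of $\phi$ gives the crucial homogeneity $L(qv)=|q|L(v)$ for every $q\in\mathbb{Q}$.

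Next I would reduce to a completely metrizable subgroup. Since $X$ is infinite, fix an injection $\iota\colon\omega\hookrightarrow X$ and let $H\leq\mathbb{Q}^{X}$ be the subgroup of functions supported on $\iota(\omega)$, which I identify with $\mathbb{Q}^{\omega}$. Equip $H$ with the product topology where each factor $\mathbb{Q}$ carries the discrete topology; then $H$ is a countable product of Polish topological groups, hence itself a Polish (in particular completely metrizable) topological group. Applying Theorem \ref{boundforcm} to $L\upharpoonright H$ yields an open neighborhood $U$ of $0$ in $H$ and some $N\in\omega$ such that $L(v)\leq N$ for every $v\in U$. Shrinking $U$ if necessary, I may take $U$ to be a basic neighborhood of the form $U=\{v\in H\mid v_{k}=0\text{ for every }k<M\}$ for some $M\in\omega$.

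To close the argument I would exploit $\mathbb{Q}$-homogeneity to contradict the bound. Pick any nonzero $w\in H$ whose support is contained in $\iota([M,\infty))$, for instance the function that is $1$ at $\iota(M)$ and $0$ elsewhere. Then $qw\in U$ for every $q\in\mathbb{Q}$, since scalar multiplication preserves this support condition. But $L(qw)=|q|L(w)$ and $L(w)>0$ (as $\phi(w)\neq 0$), so taking $|q|$ large forces $L(qw)>N$, contradicting the defining property of $U$. No real obstacle arises beyond recognising the two complementary ingredients: the pulled-back length $L$ is $\mathbb{Q}$-homogeneous, whereas Theorem \ref{boundforcm} bounds $L$ on some neighborhood of $0$, and these are jointly impossible once the neighborhood contains an entire $\mathbb{Q}$-line through a nonzero vector.
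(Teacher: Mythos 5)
Your proposal is correct and follows essentially the same route as the paper: pull back the $\ell^{1}$ length function along the hypothetical isomorphism, restrict to a completely metrizable copy of $\mathbb{Q}^{\omega}$ inside $\mathbb{Q}^{X}$ (the paper phrases this via a monomorphism $\mathbb{Q}^{\omega}\to\mathbb{Q}^{X}$ obtained from an injection $\omega\hookrightarrow X$, which it notes requires DC), invoke Theorem \ref{boundforcm}, and contradict the bound using $\mathbb{Q}$-homogeneity of the pulled-back norm on a nonzero vector supported past the cutoff. The only cosmetic difference is that you scale a single coordinate vector while the paper scales an all-ones tail sequence; both exploit the same homogeneity.
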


\begin{proof}  Suppose for contradiction that $\phi: \mathbb{Q}^X\rightarrow \bigoplus_{i\in I}\mathbb{Q}$ is an isomorphism.  By DC we let $f:\omega \rightarrow X$ be an injection, which in turn gives a monomorphism $\phi_f: \mathbb{Q}^{\omega} \rightarrow \mathbb{Q}^X$ by letting a sequence $\sigma: \omega \rightarrow \mathbb{Q}$ be taken $\phi_f(\sigma): X \rightarrow \mathbb{Q}$ defined by $\phi_f(\sigma)(x) = \begin{cases}\sigma(n)$ if $x = f(n)\\ 0$ otherwise $\end{cases}$.  Defining $L: \mathbb{Q}^{\omega} \rightarrow [0, \infty)$ to be the sum of the absolute values of the coordinates of $\phi\circ\phi_{f}$ we get that $L$ is a length function, and in fact a vector space norm.  Topologizing each coordinate discretely we topologize $\mathbb{Q}^{\omega}$ by coordinatewise convergence we have that this group topology is completely metrizable.  By Theorem \ref{boundforcm} we have a neighborhood of identity whose elements have a uniformly bounded length.  Then for some $N, M\in \omega$ every element of $\mathbb{Q}^{\omega}$ whose first $N$ coordinates are $0$ has length at most $M$.  But this requires that the sequence $\sigma \in \mathbb{Q}^{\omega}$ whose first $N$ coordinates are 0 and the rest of whose coordinates are $1$ must have length $0$, for otherwise we could select a $P \in \omega$ large enough that $P \sigma$ has length greater than $M$.  This is a contradiction to the fact that $L$ is a vector space norm.
\end{proof}

\begin{corollary}\label{cofinality}$(*)$  If $G$ is a completely metrizable topological group and $(G_n)_{n\in\omega}$ is a sequence of subgroups such that $G_n \leq G_{n+1}$ and $\bigcup_{n\in \omega}G_n = G$, then the $G_n$ are eventually clopen.
\end{corollary}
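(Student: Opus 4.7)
The plan is to package the filtration by $G_n$ into a length function on $G$ and then invoke Theorem \ref{boundforcm}. Concretely, I would define $L : G \to [0, \infty)$ by
\[
L(g) \;=\; \min\{n \in \omega \mid g \in G_n\},
\]
which is well-defined because the $G_n$ exhaust $G$ and $\omega$ is well-ordered.

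The next step is to verify that $L$ satisfies the three clauses of Definition \ref{length}. We have $L(1_G) = 0$ since $1_G \in G_0$; the identity $L(g^{-1}) = L(g)$ holds because each $G_n$ is closed under inverses; and subadditivity follows from the nesting: if $L(g) = m \leq n = L(h)$, then both $g$ and $h$ lie in $G_n$, hence $gh \in G_n$, giving $L(gh) \leq n \leq L(g) + L(h)$.

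Applying Theorem \ref{boundforcm} to $L$ produces an open neighborhood $U$ of $1_G$ together with some $N \in \omega$ such that $L(g) \leq N$ for every $g \in U$, equivalently $U \subseteq G_N$. For any $m \geq N$, the subgroup $G_m \supseteq G_N \supseteq U$ therefore contains an open neighborhood of the identity, so $G_m = \bigcup_{g \in G_m} gU$ is open in $G$. A standard fact about topological groups—the complement of an open subgroup is a union of (open) nontrivial cosets—then forces $G_m$ to be closed as well. Hence the $G_m$ are clopen for all $m \geq N$.

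The only step with any real content is the recognition that the filtration hypothesis itself manufactures a length function; once that observation is in place, Theorem \ref{boundforcm} does the heavy lifting and everything else is routine topological-group bookkeeping. I do not anticipate a genuine obstacle.
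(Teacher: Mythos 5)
Your proposal is correct and follows the paper's argument exactly: define $L(g) = \min\{n \in \omega \mid g \in G_n\}$, apply Theorem \ref{boundforcm} to get $U \subseteq G_N$, and conclude via cosets that each $G_m$ with $m \geq N$ is clopen. The only difference is that you explicitly verify the length-function axioms, which the paper leaves to the reader.
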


\begin{proof}  Assume the hypotheses.  Let $L: G \rightarrow [0, \infty)$ be the length function given by $L(g) = \min\{n\in \omega\mid g\in G_n\}$.  By Theorem \ref{boundforcm} there exists a neighborhood $U$ of $1_G$ and $N\in \omega$ for which $g\in U$ implies $L(g) \leq N$.  Then $U \subseteq G_N$.  Since $G_N = \bigcup_{g\in G_N}gU$, we have $G_N$ open, and considering the left cosets of $G_N$ we see that $G\setminus G_N$ is also open.  Similarly $n\geq N$ implies $G_n$ is clopen.
\end{proof}

From Corollary \ref{cofinality} we see that under $(*)$ a countable product of finitely generated groups does not have $\omega$-cofinality.  In particular $\mathbb{Z}^{\omega}$ would not have $\omega$-cofinality. In contrast, the axiom of choice produces an epimorphism from $\mathbb{Z}^{\omega}$ to $\mathbb{Q}$.

We have the following direct corollary to Theorem \ref{lengthfunctiontheorem}:

\begin{corollary}\label{boundforpione} $(*)$  Let $X$ be first countable at $x\in X$ and $L: \pi_1(X, x) \rightarrow [0, \infty)$ be a length function.  There exists a neighborhood $U$ of $x$ and $N \in \omega$ such that for any loop $l$ at $x$ with image in $U$ we have $L([l]) \leq N$.
\end{corollary}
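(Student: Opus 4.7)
The plan is to argue by contradiction, mimicking the structure of the proof of Corollary \ref{fundamentalgroup}. Assume the conclusion fails. By first countability at $x$, pick a countable basis $(U_n)_{n \in \omega}$ of open neighborhoods at $x$; by passing to $\bigcap_{k \leq n} U_k$ if needed I may assume the $U_n$ are nested decreasing. The failure of the conclusion means that for every neighborhood $U$ of $x$ and every $N \in \omega$ there is a loop $l$ at $x$ with image in $U$ and $L([l]) > N$. In particular, for each $n \in \omega$ I can select a loop $l_n$ at $x$ whose image lies in $U_n$ and for which $L([l_n]) > n$.

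The next step is to verify the hypotheses of Theorem \ref{lengthfunctiontheorem} part (2) for the sequence $(l_n)_{n \in \omega}$. Since $(U_n)_{n \in \omega}$ is a neighborhood basis at $x$ and the images of the $l_n$ are eventually in any given $U_m$, we have $l_n \searrow x$ in the sense of Definition \ref{Looparrow}. Moreover, for every $\sigma \in 2^{\omega}$ the infinite concatenation $l^{\sigma}$ from Definition \ref{Looparrow} is a loop at $x$, so $[l^{\sigma}] \in \pi_1(X, x)$ automatically. Hence $(l_n)_{n \in \omega}$ is deeply concatenable for $K = \pi_1(X, x)$.

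Applying Theorem \ref{lengthfunctiontheorem} part (2) with this $K$ and the length function $L$, the sequence $(L([l_n]))_{n \in \omega}$ must be bounded, contradicting our choice $L([l_n]) > n$. This contradiction yields the desired neighborhood $U$ of $x$ and bound $N$.

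There is no real obstacle here; the only thing to be careful about is the selection step, which only requires picking one loop per $n \in \omega$ and is justified by countable choice (a consequence of DC, which is part of $(*)$). Everything else is a direct translation of the failure hypothesis into the setting of Theorem \ref{lengthfunctiontheorem}(2).
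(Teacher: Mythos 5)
Your proof is correct and is essentially identical to the paper's: both argue by contradiction, choose a nested neighborhood basis $(U_n)_{n\in\omega}$, select loops $l_n$ in $U_n$ with unbounded lengths, observe that the sequence is deeply concatenable for $\pi_1(X,x)$, and invoke Theorem \ref{lengthfunctiontheorem} part (2). Your remark that the selection step is justified by countable choice (a consequence of DC) is a fine point the paper leaves implicit.
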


\begin{proof}  Let $(U_n)_{n\in \omega}$ be a nesting basis of open neighborhoods of $x$.  Supposing the claim is false, we obtain a sequence of  loops $(l_n)_{n\in \omega}$ at $x$ with the image of $l_n$ included in $U_n$ and $L([l_n]) \geq n+1$.  Certainly $(l_n)_{n\in\omega}$ is deeply concatenable for $\pi_1(X, x)$ and we get a contradiction to Theorem \ref{lengthfunctiontheorem} part (2).
\end{proof}

Corollary \ref{boundforpione} implies the following fact:

\begin{corollary}\label{niceneighborhood}$(*)$  Suppose $X$ is first countable at $x$ and $L: \pi_1(X, x) \rightarrow [0, \infty)$ is a lengh function such that $[l] \neq 1$ implies that for every $n\in \omega$ there exists $k_n\in \omega$ such that $L([l]^{k_n}) \geq n$.  Then there exists a neighborhood $U$ of $x$ such that any loop at $x$ with image in $U$ has length $0$.
\end{corollary}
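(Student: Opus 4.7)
The plan is to apply Corollary \ref{boundforpione} directly to the length function $L$. This yields a neighborhood $U$ of $x$ and an integer $N \in \omega$ such that every loop $l$ at $x$ whose image lies in $U$ satisfies $L([l]) \leq N$. I claim that this very $U$ already witnesses the conclusion of the corollary, so no further shrinking is needed.

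The key observation is that the image of a loop is preserved under self-concatenation: if $l$ is a loop at $x$ with image contained in $U$, then for each $k\in\omega$ the $k$-fold concatenation $l*l*\cdots *l$ is again a loop at $x$ whose image is the same subset of $U$, and it represents the homotopy class $[l]^k\in\pi_1(X,x)$. Consequently the bound from Corollary \ref{boundforpione} applies to every power, giving $L([l]^k)\leq N$ for all $k\in\omega$.

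Now suppose for contradiction that $U$ contains a loop $l$ at $x$ with $L([l]) > 0$. Since $L(1_{\pi_1(X,x)})=0$ by the definition of a length function, we must have $[l]\neq 1$. The hypothesis on $L$ then supplies some $k\in\omega$ with $L([l]^k)\geq N+1$, contradicting the uniform bound established in the previous paragraph. I do not foresee any real obstacle here; the work is packaged into Corollary \ref{boundforpione}, and the only role of the standing hypothesis on $L$ is to upgrade the bound $L([l])\leq N$ to $L([l])=0$ by ruling out any nontrivial element whose cyclic subgroup is $L$-bounded.
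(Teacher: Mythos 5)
Your proof is correct and follows essentially the same route as the paper: apply Corollary \ref{boundforpione} to get $U$ and $N$, note that powers of a loop in $U$ are again loops in $U$ representing $[l]^k$, and use the hypothesis on $L$ (together with $L(1)=0$, which forces $[l]\neq 1$ whenever $L([l])>0$) to contradict the uniform bound. No changes needed.
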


\begin{proof}  Assume the hypotheses and let $U$ and $N\in \omega$ be as in the conclusion of Theorem \ref{boundforpione}.  If $l$ is a loop at $x$ with image in $U$ we must have $L([l]) = 0$, else we select $k_{N+1}$  for $[l]$ as in the hypotheses.  Concatenating $l$ with itself $k_{N+1}$ times gives a loop $l'$ in $U$ at $x$ such that $L([l']) = L([l]^{k_{N+1}}) \geq N+1 > N$, a contradiction.
\end{proof}

\begin{example}  Assuming $(*)$, if $X$ is first countable at $x$ and $\phi: \pi_1(X, x) \rightarrow \mathbb{R}$ is a homomorphism there exists a neighborhood $U$ at $x$ such that for any loop $l$ at $x$ with image in $U$ we have $\phi([l]) = 0$.  This is immediate from Corollary \ref{niceneighborhood} by the length function $L([l]) = \|\phi([l])\|$.

\end{example}

Recall that a group $G$ is \emph{strongly bounded} if every action of $G$ by isometries on a metric space has bounded orbits (see \cite{Be} or \cite{dC}).

\begin{theorem}\label{boundedproduct} $(*)$ If $(G_n)_{n\in \omega}$ is a sequence of strongly bounded groups then $\prod_{n\in \omega}G_n$ is also strongly bounded.
\end{theorem}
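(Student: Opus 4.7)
The plan is to reinterpret strong boundedness in terms of length functions and then apply Theorem~\ref{boundforcm}.

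A group $G$ is strongly bounded if and only if every length function on $G$ is bounded: given an isometric action on $(X,d)$ and a base point $x_0$, the formula $L(g)=d(g\cdot x_0,x_0)$ defines a length function whose supremum is the diameter of the orbit $G\cdot x_0$, and conversely any length function $L$ yields a left-regular isometric action on the pseudometric space $(G,(g,h)\mapsto L(g^{-1}h))$ whose orbit of $1_G$ has diameter $\sup L$. So I let $G=\prod_{n\in\omega}G_n$, fix an arbitrary length function $L$ on $G$, and aim to show $\sup L<\infty$.

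I would topologize $G$ with the product of the discrete topologies on each $G_n$. This makes $G$ a Hausdorff topological group, and $d(g,h)=2^{-\min\{n\,:\,g_n\neq h_n\}}$ (with the convention $d(g,g)=0$) is a complete ultrametric compatible with the topology, so $G$ is completely metrizable. Theorem~\ref{boundforcm} then produces an open neighborhood of $1_G$, which I may shrink to a basic open set $U=\{g\in G\,:\,g_0=\cdots=g_{k-1}=1\}$, and an integer $N$ with $L(g)\leq N$ for every $g\in U$. For each $n<k$, the inclusion $\iota_n\colon G_n\hookrightarrow G$ into the $n$-th coordinate pulls $L$ back to a length function on $G_n$, which is bounded by some $M_n<\infty$ by the strong boundedness of $G_n$. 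For an arbitrary $g=(g_n)_{n\in\omega}\in G$, the elements $\iota_n(g_n)$ and $\iota_m(g_m)$ commute for $n\neq m$, so $g$ factors as $\iota_0(g_0)\iota_1(g_1)\cdots\iota_{k-1}(g_{k-1})\cdot h$ with $h=(1,\ldots,1,g_k,g_{k+1},\ldots)\in U$. The triangle inequality then yields $L(g)\leq M_0+\cdots+M_{k-1}+N$, a bound independent of $g$.

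The only substantive input is Theorem~\ref{boundforcm}, which handles the infinite ``tail'' of coordinates in one stroke under $(*)$; the finitely many head coordinates are then controlled individually by the strong boundedness of each $G_n$. I do not anticipate any real obstacle beyond verifying that the product topology is completely metrizable and that the factorization into a head and a tail interacts correctly with $L$.
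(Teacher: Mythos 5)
Your proposal is correct and follows essentially the same route as the paper: give $\prod_{n\in\omega}G_n$ the product of discrete topologies, invoke Theorem~\ref{boundforcm} to bound the length function on a tail subgroup, bound it on the finitely many head coordinates using strong boundedness of each $G_n$, and combine via the triangle inequality. The only cosmetic difference is that you package the argument through the standard equivalence ``strongly bounded iff every length function is bounded,'' whereas the paper works directly with an isometric action and peels off the head coordinates one at a time.
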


\begin{proof}  We give the group $G = \prod_{n\in \omega}G_n$ the topology of coordinate wise convergence with each $G_n$ discrete.  This topological group has a compatible complete metric given by $d(h_0, h_1) = (\min\{n\in \omega\mid h_0(n) \neq h_1(n)\})^{-1}$ for sequences $h_0, h_1 \in \prod_{n\in \omega}G_n$.  Supposing $G$ acts on metric space $(X, D)$, we select $x\in X$ and let $L(g) = D(x, gx)$.  By Theorem \ref{boundforcm} we have an open neighborhood $U$ of $1_G$ and $N\in \omega$ for which $h\in U$ implies $L(h) \leq N$.  Select $K \in \omega$ large enough that $(1_{G_n})_{n=0}^{K-1}\times \prod_{n = K}^{\infty}G_n \subseteq U$.  Since $(1_{G_n})_{n=0}^{K-2} \times G_{K-1} \times (1_{G_n})_{n = K}^{\infty}$ is strongly bounded there exists $N' \in \omega$ such that $L(h) \leq N'$ whenever $h\in (1_{G_n})_{n=0}^{K-2} \times G_{K-1} \times (1_{G_n})_{n = K}^{\infty}$.

Given $h\in (1_{G_n})_{n=0}^{K-2}\times \prod_{n = K}^{\infty}G_n$ we can write $h$ uniquely as $h = h_0h_1 = h_1h_0$ where $h_0\in (1_{G_n})_{n=0}^{K-2} \times G_{K-1} \times (1_{G_n})_{n = K}^{\infty}$ and $h_1 \in (1_{G_n})_{n=0}^{K-1}\times \prod_{n = K}^{\infty}G_n$.  Notice that

\begin{center} $D(x, hx) \leq D(x, h_1x) + D(h_1x, hx)$

$=  D(x, h_1x) + D(h_1x, h_0h_1x) =  D(x, h_1x) + D(h_1x, h_1h_0x)$

$= D(x, h_1x) + D(x, h_0x) \leq N + N'$
\end{center}

Then $N + N'$ is a bound on $L(h)$ for $h\in (1_{G_n})_{n=0}^{K-2}\times \prod_{n = K}^{\infty}G_n$ and arguing in this manner back over the finitely many remaining coordinates we obtain a bound on $L(h)$ for all $h\in G$.  We conclude that $G$ is strongly bounded.
\end{proof}

Thus under $(*)$ it is true that a countable product of finite groups is strongly bounded.  This is not the case under the axiom of choice: the product $(\mathbb{Z}/2\mathbb{Z})^{\omega}$ fails to be strongly bounded since it is $\omega$-cofinal.

\end{section}


\begin{thebibliography}{abcdefghijk}

\bibitem[Be]{Be} G. Bergman, \emph{Generating infinite symmetric groups}, Bull. London Math. Soc. 38 (2006) 429-440.

\bibitem[Bl]{Bl} A. Blass, \emph{Specker's theorem for N\"obling's group}, Proc. Amer. Math. Soc. 130 (2002), 1581-1587.

\bibitem[CC]{CC} J. Cannon, G. Conner, \emph{The combinatorial structure of the Hawaiian earring group}, Top. Appl. 106 (2000) 225-271.

\bibitem[Co]{Co} S. Corson, \emph{Torsion-free word hyperbolic groups are n-slender}, Int. J. Algebra Comput. 26 (2016), 1467-1482.

\bibitem[CoCo]{CoCo}  G. Conner, S. Corson, \emph{A note on automatic continuity}, arXiv 1710.04787

\bibitem[dC]{dC} Y. de Cornulier, \emph{Strongly bounded groups and infinite powers of finite groups}, Comm. Algebra 34 (2006) 2337-2345.

\bibitem[E]{E} K. Eda,  \emph{Free $\sigma$-products and noncommutatively slender groups}, J. Algebra 148 (1992), 243-263.


\bibitem[F]{F} L. Fuchs, Infinite abelian groups, Vols 1,2, Academic Press, San Diego 1970, 1973.

\bibitem[G]{G} E. Green, Graph products of groups, PhD thesis, University of Leeds, 1990.

\bibitem[H]{H} G. Higman.  \emph{Unrestricted free products and varieties of topological groups},  J. London Math. Soc. 27 (1952), 73-81.

\bibitem[KU]{KU} K. Kuratowski, S. Ulam, \emph{Quelques propri\'et\'es topologiques du produit combinatoire}, Fund. Math.19 (1932) 247-251.

\bibitem[M]{M}  J. Mycielski, \emph{Independent sets in topological algebras}, Fund. Math. 55 (1964), 139-147.

\bibitem[Na]{Na} J. Nakamura, \emph{Atomic properties of the Hawaiian earring group for HNN extensions}, Comm. Algebra 43 (2015), 4138-4147.

\bibitem[No]{No} G. N\"obling, \emph{Verallgemeinerung eines Satzes von Herrn E. Specker}, Invent. Math. 6 (1968),
 41-55.

\bibitem[Nu]{Nu} R. Nunke, \emph{Slender groups}, Bull. Amer. Math. Soc. 67 (1961), 274-275.

\bibitem[Sp]{Sp} E. Specker, \emph{Additive Gruppen von folgen Ganzer Zahlen}, Portugal. Math. 9 (1950), 131-140.

\bibitem[Sh1]{Sh1} S. Shelah, \emph{Can you take Solovay's inaccessible away?}, Israel J. Math. 48 (1984), 1-47.

\bibitem[Sh2]{Sh2} S.\ Shelah, \emph{Can the fundamental (homotopy) group of a space be the rationals?}, Proc. Amer. Math. Soc. 103 (1988), 627--632.

\bibitem[So]{So} R. Solovay, \emph{A model of set theory in which every set of reals is Lebesgue measurable}, Ann. of Math. 92 (1970), 1-56.

\bibitem[Sr]{Sr} S. Srivastava, A course on Borel sets, Springer, 1998.

\bibitem[Z]{Z} A. Zastrow, \emph{The non-abelian Specker-group is free}, J. Algebra 229 (2000), 55-85.

\end{thebibliography}
\end{document}